\newtheorem{theorem}{Theorem}[section]
\newtheorem{corollary}[theorem]{Corollary}
\newtheorem{lemma}[theorem]{Lemma}
\newtheorem{proposition}[theorem]{Proposition}
\theoremstyle{definition}
\newtheorem{example}[theorem]{Example}
\newtheorem{question}{Question}
\numberwithin{equation}{section}
      \def\@setcopyright{}
      \def\serieslogo@{}
\def\K{{\bf K}}
\def\frob{\texttt{frob}}
\def\Q{{\bf Q}}
\def\F{{\bf F}}
\def\Xd{{X^{d}_0(N)}}
\def\X{{X_0(N)}}
\def\Xc{{X_0^{\zeta}(N)}}
\def\Z{{\bf Z}}
\def\Pic{{\textrm{Pic}}}
\def\Gal{{\textrm{Gal}}}
\def\Aut{{\operatorname{Aut}}}
\def\End{{\operatorname{End}}}
\begin{document}


\baselineskip=17pt



\title[On Polyquadratic Twists]{On Polyquadratic Twists of $X_0(N)$}

\author[E. Ozman]{Ekin Ozman}
\address{Department of Mathematics\\ University of Texas-Austin\\
Texas, USA}
\email{ozman@math.utexas.edu}


\begin{abstract}
\noindent Let $\K=\Q(\sqrt{d_1},\ldots, \sqrt{d_k})$ be a polyquadratic number field and $N$ be a squarefree positive integer with at least $k$ distinct factors. The Galois group, $\Gal(\K/\Q)$ is an elementary abelian two group generated by $\sigma_i$ such that $g_i(\sqrt{d_i})=-\sqrt{d_i}$. Let $\zeta:\Gal(\K/\Q) \rightarrow \Aut(X_0(N))$ be the cocycle that sends $\sigma_i$ to $w_{m_i}$ where $w_{m_i}$ are the Atkin-Lehner involutions of $\X$. In this paper, we study the $\Q_p$-rational points of the twisted modular curve $\Xc$ and give an algorithm to produce such curves which has $\Q_p$-rational points for all primes $p$. Then we investigate violations of the Hasse Principle for these curves and give an asymptotic for the number of such violations. Finally, we study reasons of such violations.
\end{abstract}

\subjclass[2010]{Primary 11G20; Secondary 14G05}

\keywords{Modular Curves, Twists, Mordell-Weil Sieve}

\maketitle

\section{Introduction}

Given $(m_1, \ldots, m_k)$ pairwise relatively prime, squarefree, positive integers and $(d_1,\ldots, d_k)$ relatively prime squarefree integers we construct a twisted modular curve $\Xc$ as follows: Let $N=\Pi_{i=1}^k m_i$ and $\K=\Q(\sqrt{d_1},\ldots, \sqrt{d_k})$. The Galois group of $\K/\Q$ is elementary abelian $2$ group generated by $\sigma_i$ for $1 \leq i \leq k$. The automorphism group of the modular curve $\X$ is generated by the Atkin-Lehner involutions $w_{p_i}$  for each $p_i|N$. Let $\zeta:\Gal(\K/\Q) \rightarrow \Aut(X_0(N))$ be the cocycle that sends $\sigma_i$ to $w_{m_i}$. The curve $\Xc$ is the twist of $\X$ by $\zeta$. In particular rational points of $\Xc$ are $\K$-rational point of $\X$ fixed by $\sigma_i \circ w_{m_i}$ for each $i$ in between $1$ and $k$.

Like $\X$, the twisted curve $\Xc$ is also a moduli space. Rational points of $\Xc$ parametrize $\Q$-curves. Recall that a $\Q$-curve is an elliptic curve $E$ defined over a number field $\K$ which is isogenous to all of its Galois conjugates. It is a result of Elkies that every $\Q$-curve is geometrically isogenous to a $\Q$-curve defined over a polyquadratic field i.e. a field that is generated by quadratic fields. Therefore understanding rational points of $\Xc$ gives information about $\Q$-curves as well. Then one can naturally ask the following question, a particular case of which was first stated in \cite{Ell} and answered in \cite{Ozman} :



\begin{question}\label{Q1}
Given pairwise relatively prime, squarefree, positive integers $(m_1, \ldots, m_k)$, relatively prime squarefree integers $(d_1,\ldots, d_k)$ and a prime number $p$, what can be said about the points in $\Xc(\Q_p)$ where $N=\Pi_{i=1}^k m_i$, $\K=\Q(\sqrt{d_1},\ldots, \sqrt{d_k})$ and $\zeta$ is as described above?
\end{question}

Understanding local points is the first step towards understanding global points of any curve. If it happens to be the case that $\Xc(\Q_p)$ is empty for some $p$, then $\Xc(\Q)$ is also empty and there is no such $\Q$-curve. However, having $\Q_p$-points for every prime $p$ does not guarentee the existence of $\Q$-points unless the genus of the curve is zero. For instance, in the case of quadratic twists there are many examples which have local points, but no global points. It is even possible to give an exact asymptotic formula for the number of such curves \cite{Ozman}. This raises the following question:

\begin{question}  Is there an asymptotic for the number of twists $\Xc$ which violate the Hasse principle?
\end{question}

We address these two questions in the first five sections. In the last section, we discuss further directions and reasons of violations of the Hasse principle. More precisely, in Sections 2 and 3 we give conditions on the existence of local points. These conditions depend on splitting behavior of the prime $p$ in the given polyquadratic field $\K$. In some cases we are able to give necessary and sufficient conditions, in other cases we have only sufficient ones. However, this still allows us to give an algorithm which produces infinitely many $\Xc$ with local points everywhere, for any given $N$, as summarized in Section 4. In Section 5, we use this algorithm combined with the methods of \cite{Ozman} and \cite{Clark}, and give an asymptotic formula for the number of biquadratic twists which has local points everywhere but no global points. In fact this can be generalized to higher degree twists as well. Section 6 is about further directions in this problem. For the twists with computationally feasible equations, we try to explain the lack of global points using Mordell Weil sieve. This is equivalent to Brauer Manin obstruction by the work of Scharaschkin \cite{sch}. One can not apply Mordell Weil sieve if $\Pic^1(\Xc)(\Q)$ is empty. Understanding the Picard group is usually hard for a generic curve. Note that we even do not have equations for an arbitrary member of the twisted family. Given $N$, we give sufficient conditions for $\Pic^1(\Xc)(\Q)$ to be nonempty in the case of quadratic and some biquadratic cases. We also find families of cases where $\Pic^1(\Xc)(\Q_p)=\emptyset$ when $p$ and $N$ satisfy certain arithmetic conditions.

During the course of typing these results, the PhD thesis of Jim Stankewicz has been brought to author's attention. Many of the results in Sections 2 and 3 can be concluded from this thesis. However, we still included them here since it may be hard to derive these results from \cite{Jim} for a reader who is not familiar with the subject and also the work was independent. 




\section{The case of good reduction}

\subsection{$p$ is unramified in $K$}: In this case the extension is unramified therefore we can use the theory of Galois descent. Since $p$ is a good prime, $\Xc$ has a smooth model over $\Z_p$. Therefore, by Hensel's Lemma, $\Xc(\Q_p)$ is non-empty if and only if $\Xc(\F_p)$
is non-empty. Let $\mathcal{P}$ be a prime of $\K$ lying over $p$. According to our notation given in the introduction section, each Galois map $\sigma_i$ is twisted by some $w_{m_i}$. Let $S$ be the set of indices $i$ such that $p$ is inert in $\Q(\sqrt{d_i})$. Then decomposition group of $\mathcal{P}$ is $\{1, \prod\limits_{i \in S} \sigma_i \}$. Note that since $\K/\Q$ is an abelian Galois extension, it doesn't matter which $\mathcal{P}$ we choose. The map $\prod\limits_{i \in S} \sigma_i $ induce frobenius map on the level of residue fields.
 Note that the coycle $\zeta$ twists the action of $\prod\limits_{i \in S} \sigma_i $ by $\prod\limits_{i \in S} w_{m_i}$. Therefore $\Xc(\F_p)$ consists of $\F_{p^2}$-rational points of $\X$ that are fixed by $w_M \circ \frob$ where $M=\prod\limits_{i \in S}m_i$.

Note that, if $S$ is empty then $p$ splits completely in $\K/\Q$ and $\K \hookrightarrow \K_{\mathcal{P}} \cong \Q_p$. Therefore, $\Xc(\Q_p)=\X(\Q_p)\neq \emptyset$.

The other extreme case is when $p$ is inert in each $\Q(\sqrt{d_i})$, i.e. $S=\{1,2,\ldots,k\}$. We will show that in this case there are points in $\Xc(\F_p)$. Our strategy is to prove that there is a 
supersingular point fixed by $w_N \circ \frob$, or equivalently, by $w_N \circ w_p =w_{Np}$. This will be derived from well-known results in quaternion arithmetic, see \cite{Vig} page 152. Using more advanced tools of quaternion arithmetic we can give sufficient conditions for the existence of a $\Q_p$-point on $\Xc$ when $0<|S|<k$.

\begin{proposition}\label{unramifiedgood}
Let $p$ be a prime which doesn't divide $N$ and unramified in $K$. Let $M=\prod\limits_{i \in S} m_i$ then if $\left ( \frac{-pM}{p_i} \right )=1$ for all $p_j |N$ and $j \notin S$ then $\Xc(\Q_p)$ is nonempty.

\end{proposition}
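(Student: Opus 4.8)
The plan is to combine the reduction already carried out before the statement with the Deuring correspondence and the theory of optimal embeddings into Eichler orders. Since $p\nmid N$ we are in the good-reduction case, so by Hensel's Lemma it suffices to produce a point of $\Xc(\F_p)$, and such a point is precisely an $\F_{p^2}$-rational point of $\X$ fixed by $w_M\circ\frob$. Because $w_M$ is defined over $\Q$ it commutes with $\frob$, so $(w_M\circ\frob)^2=\frob^2$ acts trivially on $\F_{p^2}$-points and $w_M\circ\frob$ is an involution there. First I would search for the desired fixed point among the supersingular points, using the fact that on the supersingular locus $\frob$ agrees with the Atkin--Lehner involution $w_p$ (the input from quaternion arithmetic, Vign\'eras p.~152). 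Since $\gcd(M,p)=1$, on supersingular points $w_M\circ\frob=w_M\circ w_p=w_{Mp}$, so the target becomes: find a supersingular point fixed by $w_{Mp}$.

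Next I would translate ``fixed by $w_{Mp}$'' into an endomorphism condition. If $(E,C)$ is a supersingular point fixed by $w_{Mp}$, then composing the Frobenius isogeny $E\to E^{(p)}$ of degree $p$ with the cyclic $M$-isogeny coming from $w_M$ and the isomorphism $E^{(p)}/C_M^{(p)}\cong E$ yields an endomorphism $\phi\in\End(E)$ of reduced norm $pM$ with $\phi^2=-pM$; thus $\phi$ generates an order $\mathcal{O}'$ in the imaginary quadratic field $K'=\Q(\sqrt{-pM})$ which embeds into the Eichler order $\End(E,C)$ of level $N$ in the definite quaternion algebra $B_{p,\infty}$. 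Conversely, by the Deuring correspondence the supersingular points of $\X$ over $\overline{\F_p}$ biject with the class set of Eichler orders of level $N$ in $B_{p,\infty}$, and the right orders of ideal representatives range over the whole genus of such orders. Producing a supersingular point fixed by $w_{Mp}$ therefore amounts to producing an optimal embedding of $\mathcal{O}'$ into some Eichler order in this genus.

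The heart of the argument is then a purely local computation feeding into a global existence statement. I would check that optimal local embeddings exist at every place: at $\infty$ because $K'$ is imaginary and $B_{p,\infty}$ is ramified there; at $p$ because $p\mid pM$ forces $p$ to be non-split (ramified) in $K'$, so $K'_p$ is a field and embeds into the local division algebra; at the primes $\ell\mid M$ because $\ell\mid pM$ makes $\ell$ ramified in $K'$, where the local embedding number for an Eichler order of level $\ell$ equals $1$; and at the remaining primes $\ell\mid N/M$ precisely because the hypothesis $\left(\frac{-pM}{\ell}\right)=1$ says $\ell$ splits in $K'$, where the local embedding number equals $2$. The primes not dividing $N$ give the maximal order $M_2(\Z_\ell)$, where the embedding number is positive. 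Having shown all local embedding numbers are positive, I would invoke Eichler's formula (Vign\'eras p.~152) expressing the total weighted number of optimal embeddings across the genus as $h(\mathcal{O}')\prod_v m_v(\mathcal{O}')$; since $h(\mathcal{O}')\geq 1$ and every local factor is positive, the total is positive, so at least one Eichler order in the genus admits an optimal embedding of $\mathcal{O}'$. This yields the desired fixed supersingular point, whence $\Xc(\F_p)\neq\emptyset$ and $\Xc(\Q_p)\neq\emptyset$.

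The main obstacle I anticipate is the passage from local to global: for a definite quaternion algebra the local-global principle for embeddings into a \emph{single fixed} order can fail, so it is essential that we only need an embedding into \emph{some} order in the genus --- which is exactly what the set of supersingular points supplies --- and that the trace/mass formula packages the local data correctly. A secondary technical point is pinning down the quadratic order $\mathcal{O}'$ and its conductor together with the local computation at $2$, matching the Kronecker-symbol conventions used in the hypothesis so that ``ramified'' and ``split'' translate into positive local embedding numbers as claimed; these are routine but must be handled with care.
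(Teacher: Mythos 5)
Your proposal follows essentially the same route as the paper's proof: reduce via Hensel's Lemma to finding an $\F_p$-point, use that Frobenius acts as $w_p$ on the supersingular locus to convert the problem into finding a supersingular point fixed by $w_{Mp}$, translate this into an embedding of $\Z[\sqrt{-Mp}]$ into some Eichler order of level $N$ in the quaternion algebra ramified at $p$ and $\infty$, and conclude by Eichler's optimal embedding theorem. The only difference is one of detail, not of substance: where the paper cites Vign\'eras for the embedding criterion, you correctly spell out the local embedding numbers at each place and the genus-level trace formula argument, which is exactly the content behind the citation.
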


Remark: If $S=\emptyset$ or $S=\{1,\ldots,k\}$ then Proposition \ref{unramifiedgood} implies that $\Xc(\Q_p)$ is non empty. In other words, if $p$ splits in each $\Q(\sqrt{d_i})$ or inert in each $\Q(\sqrt{d_i})$ then $\Xc(\Q_p)$ is non-empty.

\begin{proof}
The idea will be similar to the idea of Theorem 3.17 in \cite{Ozman}. We will show that there is a supersingular point in $\Xc(\F_p)$. For the convenience of the reader, we produce the related parts of the proof here again.

Let $\Sigma_N$ be the set of tuples $(E,C)$ such that $E$ is a supersingular elliptic curve over characteristic $p$ and $C$ is cyclic group of order $N$. We start by studying the action of the involution $w_N \circ \sigma$ on $\Sigma_N$. 
Let $B:=\End(E) \otimes_{\Z} \Q$, then $B$ is the unique quaternion algebra over $\Q$ which is ramified only at $p$ and $\infty$, $\End(E)$ is a maximal order in $B$, and $\End(E,C)$ is an Eichler order of level $N$. The frobenius map acts on the set of singular points of $X_0(Np)_{/\F_p}$ as $w_p$ (see Chapter V, Section 1 of \cite{Deligne} or Proposition 3.8 in \cite{Ribet2}).


The modular curve $X_0(Np)$ has bad reduction over $\F_p$. A regular model of $X_0(N)_{/ \F_p}$ is given by Deligne-Rapaport and consists of two copies of $X_0(N)_{/ \F_p}$ glued along supersingular points.  Let $\psi$ be a map from $X_0(N)_{/ \F_p}$ to $X_0(Np)_{/ \F_p}$, an isomorphism onto one of the two components. The map $\psi$ takes the supersingular locus of $X_0(N)_{/ \F_p}$ to the supersingular locus of $X_0(Np)_{/ \F_p}$. The set $\Sigma_N$ is the supersingular locus of $X_0(N)_{/ \F_p}$. The Atkin-Lehner operators $w_{Mp}$ acts on $X_0(Np)$ for every $M|N$, in particular $w_{Mp}$ acts on $\psi(\Sigma_N)$. When we say the action of $w_{Mp}$ on $\Sigma_N$, it is meant the action of $w_{Mp}$ on $\psi(\Sigma_N)$.

The involution $w_{Mp}$ has a fixed point on $\Sigma_N$ if and only if $\Z[\sqrt{-Mp}]$ embeds in $\End(E,C)$ for some $(E,C) \in \Sigma_N$. Since $\End(E,C)$ is an Eichler order of level $N$ in $\Q_{p,\infty}$ by Eichler's optimal embedding theorem(see \cite{Vig}) $\Z[\sqrt{-Mp}] \hookrightarrow \End(E,C)$ if and only if we have the condition given in the proposition statement. This implies that there is a supersingular point in $\Xc(\F_p)$ and by Hensel's lemma there is a point in $\Xc(\Q_p)$.

\end{proof}

By Proposition \ref{prop:unramifiedgood} we have a sufficient condition for $\Xc(\Q_p) \neq \emptyset$ and if we have the conditions given in this proposition then reduction of any point in $\Xc(\Q_p)$ has to be a supersingular point. If the condition of the Proposition \ref{prop:unramifiedgood} fails, there may still be ordinary points in $\Xc(\F_p)$ which are points in $\X(\F_{p^2})$ fixed by $\frob \circ w_M$. 

Now we will assume that the condition of the Proposition \ref{prop:unramifiedgood} fails i.e. $\left ( \frac{-pM}{p_j} \right )=-1$ for some $p_j|N$ and $j \notin S$. In this case if there is $x$ in $\Xc(\F_p)$ then $x$ has to be an ordinary point. The following proposition gives the conditions for the existence of a $\F_p$-rational, $w_M$ fixed ordinary point of $\X$ which is another sufficient condition for $\Xc(\F_p)\neq \emptyset$. Note that if both Propositions \ref{prop:unramifiedgood} and \ref{ordinarycm} fail then $\Xc(\F_p)$ can still be non-empty but such a point has to be an ordinary point defined over $\F_{p^2}-\F_p$ and fixed by $w_M \circ \frob$.

\begin{lemma}\label{lemmacmlifting} Let $p$ be an odd prime and $M,p_j$ be as above. There is a $w_M$-fixed $\Q_p$-rational point on $X_0(N)$ if and only if there is a $w_M$-fixed $\F_p$-rational point on $X_0(N)_{/\F_p}$.
\end{lemma}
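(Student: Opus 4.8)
The plan is to deduce both implications from the existence of a smooth proper model of $\X$ over $\Z_p$, together with the behaviour of the fixed locus of $w_M$ under reduction. Since $p \nmid N$, the curve $\X$ has good reduction at $p$ and therefore admits a smooth projective model $\mathcal{X}$ over $\Z_p$ whose special fiber is $X_0(N)_{/\F_p}$. The Atkin--Lehner involution $w_M$ is defined over $\Z[1/N]$, hence extends to an involution of $\mathcal{X}$, which I will again denote $w_M$.

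The forward direction is immediate. Given a $w_M$-fixed point $P \in \X(\Q_p)$, properness and the valuative criterion extend $P$ uniquely to a section $\tilde P \in \mathcal{X}(\Z_p)$. Because $w_M \circ \tilde P$ is another extension of $w_M(P)=P$, uniqueness forces $w_M(\tilde P)=\tilde P$, and reducing modulo $p$ yields a $w_M$-fixed point in $X_0(N)_{/\F_p}(\F_p)$.

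For the converse, which is the substantive direction, I would work with the fixed-point subscheme $Z:=\mathcal{X}^{w_M}$, the closed subscheme representing the functor $T \mapsto \{ x \in \mathcal{X}(T) : w_M(x)=x \}$. The key input is that $w_M$ has order $2$ and $p$ is odd, so that $2$ is invertible on $\Z_p$; under this hypothesis the fixed locus of a finite group whose order is prime to the residue characteristic, acting on a smooth scheme, is itself smooth over the base (one may linearize the involution on tangent spaces, where it has eigenvalues $\pm 1$). Hence $Z$ is smooth over $\Z_p$, and since for $M>1$ the involution $w_M$ is nontrivial on the curve, $Z$ is étale of relative dimension zero over $\Z_p$ (the case $M=1$ being Hensel's lemma applied directly to $\mathcal{X}$). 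Formation of the fixed locus commutes with the base change $\Z_p \to \F_p$, so the special fiber $Z_{\F_p}$ is precisely the $w_M$-fixed locus of $X_0(N)_{/\F_p}$.

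Given a $w_M$-fixed $\F_p$-rational point, I regard it as a point of $Z(\F_p)$; because $Z$ is smooth over $\Z_p$, Hensel's lemma lifts it to a section in $Z(\Z_p)$, whose generic fiber is a $\Q_p$-point of $\X$ fixed by $w_M$. The crux of the argument, and the only place where the oddness of $p$ enters, is the smoothness of $Z$ over $\Z_p$: in residue characteristic $2$ the fixed locus of an involution may become non-reduced or singular and the lift can fail, so the main technical point is precisely this smoothness (equivalently, that the fixed points neither collide nor acquire nilpotents upon reduction).
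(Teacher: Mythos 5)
Your proof is correct, and in the substantive direction it takes a genuinely different route from the paper. The paper's argument is moduli-theoretic: a $w_M$-fixed point of $X_0(N)(\F_p)$ is encoded by an endomorphism $\lambda$ of the underlying elliptic curve with $\ker(\lambda)=C^M$, and Deuring's lifting theorem lifts the pair $(E,\lambda)$ to a number field $B$ having a prime over $p$ of residue degree one, so that $B \hookrightarrow \Q_p$ and the lifted data give a $w_M$-fixed $\Q_p$-point (the paper must additionally check that the cyclic $p_j$-subgroups survive the lift, which it does by comparing degrees of $\lambda-[x]$ and $\tilde{\lambda}-[x]$); the forward direction, via properness of the fixed locus, is the same one-line argument in both proofs. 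Your argument replaces the Deuring step by tameness: since $p$ is odd, the fixed-point scheme $Z=\mathcal{X}^{w_M}$ of the smooth model over $\Z_p$ is smooth, and (for $M>1$) finite \'etale over $\Z_p$ with formation commuting with reduction, so Hensel's lemma produces the lift. What your approach buys: it is cleaner and more general (any involution of order prime to $p$ on a curve with good reduction), and it isolates exactly where the hypothesis that $p$ is odd enters -- in residue characteristic $2$ the fixed locus can fail to be smooth, which is consistent with the paper handling $p=2$ by a separate argument. What the paper's approach buys: the Deuring lift produces a CM point over a number field together with control of its endomorphism ring, and it is precisely this extra arithmetic data (the orders $\Z[\sqrt{-M}]$, the Hilbert class polynomial $H(-M)$, primes of degree one in $\Q(H(-M))$) that the adjacent results, Proposition \ref{ordinarycm} and Proposition \ref{prop:inertram}, rely on; your geometric argument proves the equivalence but not that refinement. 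One small point worth making explicit in your write-up: no connected component of $Z$ can be concentrated in the special fiber, because smoothness over $\Z_p$ implies flatness; granting that, relative dimension zero follows from finiteness of the fixed locus on the generic fiber, and your argument is complete as stated.
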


\begin{proof} 
Say $(E,C^{N/M} \oplus C^M)$ is a $w_M$-fixed point of $X_0(N)(\F_p)$, where $C^{N/M}, C^M$ denotes cyclic subgroups of order $N/M,M$. In particular $C^{N/M}$ can be written as $\oplus C^{p_j}$ where $j \notin S$. Then $\ker(\lambda)=C^M$ and $\lambda(C^{p_j})=C^{p_j}$  for some endomorphism $\lambda$ of $E$. In particular, $p_j$ divides separable degree of $\lambda-[x]$ where $[x]$ denotes multiplication by $x$ map for some $1\leq x \leq p_j-1$. We have a short exact sequence as follows:

$$ 0 \rightarrow C^M \rightarrow E \stackrel{\lambda}{\rightarrow} E \rightarrow 0$$

By Deuring's Lifting Theorem(\cite{Deuring}), this short exact sequence can be lifted to a short exact sequence as below where everything is defined over a number field $B$ and there is a prime $\nu$ of $B$ with residue degree $1$ and the reduction of $\tilde{E}, \tilde{\lambda}$ mod $\nu$ gives us the sequence above. 

$$ 0 \rightarrow \tilde{C^M} \rightarrow \tilde{E} \stackrel{\tilde{\lambda}}{\rightarrow} \tilde{E} \rightarrow 0.$$

The lifted curve $\tilde{E}$ and the lifted map $\tilde{\lambda}$ give rise to a $w_M$-fixed point on $\X(B)$ if and only if there exists cyclic groups of order $p_j$ in $\ker(\tilde{\lambda}-[x])$  for all $j$, where $[x]$ is multiplication by $x$ map for some integer $x$ in $\{1,\ldots,p_j-1\}$. By assumption, $p_j$ divides the degree of $\lambda-[x]$ for some $x$ in $\{1,\ldots,p_j-1\}$. Since $(p,M)=1$, degree of $\lambda=M$ is the same as degree of $\tilde{\lambda}$. Therefore $\tilde{E}$ gives rise to a $w_M$ fixed point in $\X(B)$ since both inertia and residual degrees of $p$ in $B$ are one, $B \hookrightarrow \Q_p$.

Conversely, since the fixed locus of $w_M$ is proper, a $w_M$-fixed point on $X_0(N)(\Q_p)$ reduces to a $w_M$-fixed point on $X_0(N)(\F_p)$.
\end{proof}

Remark: For $p=2$, the argument goes as in \cite{Ozman} Section 4. 

\begin{proposition}\label{ordinarycm}
Let $p$ be an odd prime. There is a $w_M$-fixed point in $X_0(N)(\Q_p)$ whose reduction to $\F_p$ is ordinary if and only if $\left( \frac{-M}{p} \right)  = 1$ and the number field $\Q(H(-M))$ has a prime lying over $p$ with residue degree one where $H(-M)$ denotes the Hilbert class polynomial of reduced discriminant $M$. If $p=2$, since the unique ordinary elliptic curve over $\F_2$ has endomorphism ring $\Z[\frac{1+\sqrt{-7}}{2}]$, $M=7$.
\end{proposition}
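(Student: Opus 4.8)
The plan is to combine Lemma \ref{lemmacmlifting} with Deuring's theory of complex multiplication so that the whole statement becomes a question about ordinary elliptic curves over $\F_p$ with an endomorphism squaring to $-M$. By Lemma \ref{lemmacmlifting} (and the remark for $p=2$), producing a $w_M$-fixed $\Q_p$-point of $\X$ with ordinary reduction is equivalent to producing a $w_M$-fixed point $(E,C)\in\X(\F_p)$ with $E$ ordinary. First I would record the dictionary already implicit in the proof of Lemma \ref{lemmacmlifting}: a $w_M$-fixed point carries an endomorphism $\lambda$ with $\ker\lambda=C^M$, $\lambda+\bar\lambda=0$ and $\lambda\bar\lambda=[M]$, so that $\lambda$ is a square root of $-M$ in $\End(E)$ of degree $M$; conversely such a $\lambda$, together with a Frobenius-stable cyclic subgroup of order $N/M$, produces a $w_M$-fixed $\F_p$-point. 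Thus existence of the desired point is equivalent to existence of an ordinary $E/\F_p$ with $\sqrt{-M}\in\End(E)$ carrying an $\F_p$-rational cyclic $N/M$-structure.

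For the forward direction, assume such an $(E,C)$ exists. Since $E$ is ordinary, $\End(E)$ is an order $\mathcal{O}$ in an imaginary quadratic field $K$, and the presence of $\sqrt{-M}$ forces $K=\Q(\sqrt{-M})$. Ordinariness of $E/\F_p$ is equivalent to $p$ splitting in $K$ (the Frobenius $\pi$ satisfies $\pi\bar\pi=p$ and generates $K$), which, as $p\nmid 2M$, is exactly the condition $\left(\frac{-M}{p}\right)=1$. Finally, since the point is $\F_p$-rational, $j(E)\in\F_p$; as $j(E)$ is the reduction of a root of $H(-M)$, this identifies a prime of $\Q(H(-M))$ above $p$ whose residue field is $\F_p$, i.e. of residue degree one.

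For the converse I would run Deuring's lifting and reduction theorem (\cite{Deuring}) in the other direction. The hypothesis $\left(\frac{-M}{p}\right)=1$ guarantees that $p$ splits in $K=\Q(\sqrt{-M})$, and the residue-degree-one prime $\mathfrak{P}$ of $\Q(H(-M))$ singles out a CM $j$-invariant reducing into $\F_p$. Reducing the corresponding CM curve modulo $\mathfrak{P}$ yields an ordinary $E/\F_p$ with $\End(E)\supseteq\Z[\sqrt{-M}]$; the reduction of $\sqrt{-M}$ gives $\lambda$ of degree $M$, hence the $M$-part $C^M=\ker\lambda$, which is cyclic of order $M$ because $M$ is squarefree. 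To complete the level structure I would, for each prime $\ell\mid N/M$, take $C^\ell$ to be a Frobenius eigenspace in $E[\ell]$; this is $\F_p$-rational and, since $\pi$ and $\lambda$ both lie in the commutative ring $\mathcal{O}$, automatically $\lambda$-stable. As $\gcd(M,N/M)=1$, the subgroup $C=C^M\oplus\bigoplus_{\ell\mid N/M}C^\ell$ is cyclic of order $N$, and $(E,C)$ is the required point. The case $p=2$ is handled directly: over $\F_2$ there is a unique ordinary $j$-invariant, whose endomorphism ring is $\Z[\frac{1+\sqrt{-7}}{2}]$, so $\sqrt{-M}$ can lie in it only when $M=7$.

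The main obstacle I anticipate is the precise bookkeeping in the Deuring correspondence linking the residue degree of a prime of $\Q(H(-M))$ to the field of definition of the reduced $j$-invariant: one must verify that $p$ splitting in $K$ forces ordinary rather than supersingular reduction, and that the reduced $j$-invariant lands in $\F_p$ exactly when the chosen prime has residue degree one, being careful about whether $\Q(H(-M))$ is the field generated by a single root or the full ring class field. The remaining level-structure and cyclicity checks, while necessary, should be routine once this correspondence is in place.
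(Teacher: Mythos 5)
Your overall route is the intended one: the paper prints no separate proof of Proposition \ref{ordinarycm}, treating it as the combination of Lemma \ref{lemmacmlifting} with Deuring's correspondence (as in \cite{Ozman}, where the analogous statement has $M=N$), and your necessity half is sound — the dictionary $\lambda^2=-M$ with $\ker\lambda=C^M$ is correct for ordinary $E$ (any $\lambda$ with $\lambda^2\in\Z$ in an imaginary quadratic order has trace zero), ordinariness is equivalent to $p$ splitting in $\Q(\sqrt{-M})$, and a $j$-invariant in $\F_p$ does single out a residue-degree-one prime. Your flagged worry about which order $\Q(H(-M))$ corresponds to is also benign: the ring class field of $\Z[\sqrt{-M}]$ contains that of the maximal order, so a degree-one prime upstairs gives one downstairs, and in either direction the reduced curve carries $\sqrt{-M}$.

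The genuine gap is in the step you labelled routine: the construction of the prime-to-$M$ level structure in the converse. For $\ell\mid N/M$, a Frobenius eigenline in $E[\ell]$ exists only when the characteristic polynomial $x^2-a_px+p$ of $\pi$ has a root mod $\ell$, i.e.\ (away from the conductor) when $\ell$ is not inert in $\Q(\pi)=\Q(\sqrt{-M})$; the hypotheses $\left(\frac{-M}{p}\right)=1$ and the degree-one prime in $\Q(H(-M))$ constrain $p$ only and say nothing about how $\ell$ behaves in $\Q(\sqrt{-M})$. Moreover the inference ``$\pi$ and $\lambda$ commute, hence a $\pi$-eigenline is $\lambda$-stable'' is backwards: commutativity shows $\lambda$ preserves each $\pi$-\emph{eigenspace}, but when $\pi$ acts as a scalar on $E[\ell]$ every line is a $\pi$-eigenline while only the (at most two) $\lambda$-eigenlines are $\lambda$-stable. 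What your argument actually needs is that $x^2+M$ have a root mod $\ell$ for each prime $\ell\mid N/M$: then a $\lambda$-eigenline exists and is automatically $\pi$-stable (if $\lambda v=xv$ then $\lambda\pi v=\pi\lambda v=x\pi v$), hence $\F_p$-rational. This condition is moreover necessary, since a $w_M$-fixed point forces $\lambda(C^{p_j})=C^{p_j}$ — exactly the hypothesis ``$p_j$ divides the separable degree of $\lambda-[x]$'' presupposed in the proof of Lemma \ref{lemmacmlifting}. So if some $\ell\mid N/M$ is inert in $\Q(\sqrt{-M})$ there is no ordinary $w_M$-fixed $\F_p$-point at all, even though the stated right-hand conditions can hold; your converse as written does not close, and for $M\neq N$ the proposition itself should carry this supplementary eigenvalue condition at the primes dividing $N/M$ (for $M=N$, the case treated in \cite{Ozman}, the issue is vacuous).
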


Note that since $\Xc$ has good reduction at $p$, the big primes(compared to the genus of $\Xc$) are not problematic, i.e. $\Xc(\F_p)\neq \emptyset$ for all such primes. Hence we have the following result for $p$ unramified in $K$ and not dividing $N$.

\begin{proposition} \label{prop:unramifiedgood}
Let $p$ be a prime not dividing $N$ and inert in the quadratic fields $\Q(\sqrt{d_i})$ for $i \in S$. Let $M$ be the product of $m_i$ for $i \in S$. Then $\Xc(\Q_p)$ is non-empty if one of the following holds:
\begin{itemize}
\item $p > 4g^2$ where $g$ is the genus of $\X$. 
\item $S$ is empty or $S=\{1,\ldots, k\}$.
\item $\left(\frac{-pM}{p_j}\right )=1$ for $p_j|N$ and $j \notin S$.
\item $\left(\frac{-pM}{p_j}\right )=-1$ for some $p_j|N$ and $j \notin S$:  Hilbert class polynomial of reduced discriminant $M$ has a root mod $p$ if $p$ is odd and $M=7$ if $p=2$.

\end{itemize}
\end{proposition}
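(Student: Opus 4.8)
The plan is to read this proposition as a consolidation of the unramified good-reduction analysis carried out above, routing each of the four sufficient conditions to the statement that already handles it, with only the large-prime bullet requiring an independent argument. The common reduction step is this: because $p$ is unramified in $\K$ and $p\nmid N$, Galois descent applies and $\Xc$ has a smooth proper model over $\Z_p$; its special fibre $\Xc_{/\F_p}$ is a smooth, geometrically connected curve of the same genus $g$ as $\X$, since the twist becomes isomorphic to $\X$ over $\overline{\F}_p$. Hence, outside the completely split case, it is enough to produce a point of $\Xc(\F_p)$ and invoke Hensel's lemma to lift it to $\Xc(\Q_p)$, using the identification $\Xc(\F_p)=\{x\in\X(\F_{p^2}):(w_M\circ\frob)(x)=x\}$ recorded above.

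First I would settle the bullet $p>4g^2$, the only case not already proved. The Hasse--Weil bound applied to the smooth genus-$g$ curve $\Xc_{/\F_p}$ gives $\#\Xc(\F_p)\geq p+1-2g\sqrt{p}$, and the hypothesis $\sqrt{p}>2g$ forces $2g\sqrt{p}<p<p+1$, making the right-hand side strictly positive; thus $\Xc(\F_p)\neq\emptyset$ and Hensel finishes. For $g=0$ the error term vanishes and the bound already reads $\#\Xc(\F_p)=p+1$, so the conclusion is automatic.

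Next I would dispatch the remaining three bullets to earlier results. For $S=\emptyset$ the completely split behaviour gives $\K\hookrightarrow\K_{\mathcal P}\cong\Q_p$ and hence $\Xc(\Q_p)=\X(\Q_p)$, which contains the cusps; for $S=\{1,\dots,k\}$ one has $M=N$ and the quaternion-arithmetic construction of Proposition \ref{unramifiedgood} yields a supersingular point fixed by $w_N\circ\frob=w_{Np}$. The bullet $\left(\frac{-pM}{p_j}\right)=1$ for all $p_j\mid N$ with $j\notin S$ is precisely Proposition \ref{unramifiedgood}, whose Eichler optimal-embedding argument produces a supersingular $(w_M\circ\frob)$-fixed point of $\Xc(\F_p)$. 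Finally, when $\left(\frac{-pM}{p_j}\right)=-1$ for some such $p_j$ the supersingular option is unavailable and any point of $\Xc(\F_p)$ must be ordinary; here a root of the Hilbert class polynomial $H(-M)$ modulo $p$ (respectively $M=7$ for $p=2$) supplies, via Proposition \ref{ordinarycm} and Lemma \ref{lemmacmlifting}, a $w_M$-fixed $\F_p$-rational point of $\X_{/\F_p}$. Being $\F_p$-rational it is $\frob$-fixed, so it is fixed by $w_M\circ\frob$ and lies in $\Xc(\F_p)$, and Hensel again lifts it.

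The main obstacle is not any single computation but the compatibility underpinning every case: one must verify that reduction commutes with the twist so that $\Xc_{/\F_p}$ really is the smooth genus-$g$ curve to which Hasse--Weil and Hensel apply, and that the dictionary $\Xc(\F_p)=\{x\in\X(\F_{p^2}):(w_M\circ\frob)(x)=x\}$ correctly turns a $w_M$-fixed point of $\X$ into a point of the twist. Once this is in hand, three of the four bullets are short appeals to Propositions \ref{unramifiedgood} and \ref{ordinarycm} together with Lemma \ref{lemmacmlifting}, and only the large-prime Weil estimate is genuinely new.
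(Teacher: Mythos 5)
Your proposal is correct and follows essentially the same route as the paper: the paper presents Proposition \ref{prop:unramifiedgood} as a consolidation of the preceding Section 2.1 results, handling the split and totally inert cases by the descent discussion, the third bullet by Proposition \ref{unramifiedgood}, the fourth by Proposition \ref{ordinarycm} together with Lemma \ref{lemmacmlifting}, and dismissing primes $p>4g^2$ by exactly the Weil-bound-plus-Hensel observation you spell out. Your write-up merely makes explicit the estimate $\#\Xc(\F_p)\geq p+1-2g\sqrt{p}>0$ and the dictionary $\Xc(\F_p)=\{x\in\X(\F_{p^2}):(w_M\circ\frob)(x)=x\}$, both of which the paper states without elaboration.
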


\subsection {$p$ is ramified in $\K$}: Let $p$ be ramified in only one of the fields $\Q(\sqrt{d_i})$ and splits in the others. Let $\nu$ be a prime of $\K$ lying over $p$. In this case we don't have the theory of Galois descent since the extension is ramified. The $\Q_p$-rational points of  $\Xc$ are points in $\X(K_\nu)$ that are fixed by $\sigma_{d_i} \circ w_{m_i}$. This case is quite similar to the case of quadratic twists, see \cite{Ozman}.


By Lemma \ref{lemmacmlifting}, we have the necessary and sufficient conditions to lift a $w_{m_i}$ fixed point of $\X(\F_p)$ to a $w_{m_i}$-fixed point of $\X(\Q_p)$, which gives a point in $\Xc(\Q_p)$. The following proposition gives us the converse. The proof is very similar to the proof of Proposition 4.4 in \cite{Ozman} so we don't reproduce the proof here.

 \begin{proposition}\label{prop:diag}
   Let $x$ be a point of $X_0(N)(\K_{\nu})$ such that $w_{m_i}(x^{\sigma_{d_i}})=x$ then $x$ reduces to a $w_{m_i}$-fixed point on the special fiber of $\mathcal{X}_0(N)_{/R}$ where $R$ is the integer ring of $\K_{\nu}$.
 \end{proposition}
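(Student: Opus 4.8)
The plan is to deduce the claim by reducing the defining relation $w_{m_i}(x^{\sigma_{d_i}})=x$ modulo $\nu$, exploiting the fact that in the ramified setting the automorphism $\sigma_{d_i}$ becomes trivial after passing to residue fields, while $w_{m_i}$, being defined over $\Q$, commutes with reduction. To set up the reduction, note that since $p\nmid N$ the curve $\X$ has good reduction at $p$, so there is a smooth proper model $\mathcal{X}_0(N)_{/R}$ over the ring of integers $R$ of $\K_{\nu}$. By the valuative criterion of properness, the $\K_{\nu}$-point $x$ extends uniquely to a section $\Spec R \to \mathcal{X}_0(N)$, and hence has a well-defined reduction $\bar{x}$ in the special fiber $\mathcal{X}_0(N)_{/\F_p}$; the same applies to $x^{\sigma_{d_i}}$.

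The key local input comes next. Because $p$ is ramified in $\Q(\sqrt{d_i})$ and splits in the remaining $\Q(\sqrt{d_j})$, the completion $\K_{\nu}/\Q_p$ is a totally ramified quadratic extension with residue field $\F_p$, and $\sigma_{d_i}$ restricts to a generator of its Galois group, lying in the inertia group. Since inertia acts trivially on the residue field, the automorphism induced by $\sigma_{d_i}$ on $R/\mathfrak{m}=\F_p$ is the identity, and functoriality of reduction yields $\overline{x^{\sigma_{d_i}}}=\bar{x}$. This is the heart of the argument and precisely where ramification is used: had $p$ been unramified and inert, $\sigma_{d_i}$ would reduce to Frobenius, forcing one instead into the $w_M\circ\frob$-fixed analysis of Section 2.1.

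It then remains to reduce the relation itself. As $w_{m_i}$ is an Atkin-Lehner involution defined over $\Q$, it extends to an automorphism of $\mathcal{X}_0(N)_{/R}$ commuting with reduction, so
$$\bar{x}=\overline{w_{m_i}(x^{\sigma_{d_i}})}=w_{m_i}\!\left(\overline{x^{\sigma_{d_i}}}\right)=w_{m_i}(\bar{x}),$$
which shows that $\bar{x}$ is a $w_{m_i}$-fixed point of the special fiber, as claimed.

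The main obstacle is making the trivial action of $\sigma_{d_i}$ on reductions fully rigorous: one must check that conjugation by $\sigma_{d_i}$ corresponds to base change along a map $\Spec R \to \Spec R$ whose special fiber is the identity on $\Spec \F_p$, so that $\overline{x^{\sigma_{d_i}}}$ and $\bar{x}$ genuinely coincide as points of $\mathcal{X}_0(N)_{/\F_p}$. Once the smooth proper $R$-model and the commutation of the $\Q$-rational involution $w_{m_i}$ with reduction are in place, everything else is formal.
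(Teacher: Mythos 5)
Your proof is correct and takes essentially the approach the paper intends: the paper does not reprint a proof but defers to Proposition 4.4 of \cite{Ozman}, whose argument is exactly yours --- extend $x$ to a section of the smooth proper model $\mathcal{X}_0(N)_{/R}$ by properness (good reduction since $p\nmid N$), use that $\sigma_{d_i}$ generates the inertia group of the totally ramified quadratic extension $\K_{\nu}/\Q_p$ and hence induces the identity on the residue field, so $\overline{x^{\sigma_{d_i}}}=\bar{x}$, and then reduce the relation using that the $\Q$-rational involution $w_{m_i}$ extends over the model and commutes with reduction. The one point you flag as needing care, namely that $\Spec(\sigma_{d_i})\colon \Spec R \to \Spec R$ restricts to the identity on the special fiber so that $x^{\sigma_{d_i}}$ and $x$ have the same reduction, is indeed the heart of the matter, and you treat it correctly.
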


Combining Lemma \ref{lemmacmlifting} and Proposition \ref{prop:diag} we can conclude the following:

\begin{proposition}\label{prop:inertram}
Let $p$ be an odd prime that doesn't divide $N$ and ramified only in one of the quadratic fields, namely $\Q(\sqrt{d_i})$ and splits in the others. Then $\Xc(\Q_p)$ is non-empty if and only if there is a prime $\nu$ in $\Q(H(-M))$ lying over $p$ with inertia degree one. 
 \end{proposition}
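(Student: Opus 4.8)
The plan is to reduce the statement to the existence of a single $w_M$-fixed $\Q_p$-rational point on $\X$ and then feed that into Proposition~\ref{ordinarycm}. Since $p$ splits in each $\Q(\sqrt{d_j})$ with $j \ne i$, the automorphisms $\sigma_{d_j}$ act trivially after completing at $\nu$, so only the single ramified field $\Q(\sqrt{d_i})$ contributes locally; writing $\sigma = \sigma_{d_i}$ for the nontrivial element of $\Gal(\K_\nu/\Q_p)$ and $M = m_i$, we have $\Xc(\Q_p) = \{x \in \X(\K_\nu) : w_M(x^\sigma) = x\}$, which is exactly the quadratic-twist picture.

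For the equivalence I would first record the easy implication: if $y \in \X(\Q_p)$ is $w_M$-fixed then $y^\sigma = y$, so $w_M(y^\sigma) = w_M(y) = y$ and $y \in \Xc(\Q_p)$; hence a $w_M$-fixed $\Q_p$-point on $\X$ already produces a local point on the twist. Conversely, given $x \in \Xc(\Q_p)$, Proposition~\ref{prop:diag} shows its reduction $\bar x$ is a $w_M$-fixed $\F_p$-point on $X_0(N)_{/\F_p}$ (the inertia generator $\sigma$ acts trivially on the residue field because $\K_\nu/\Q_p$ is ramified), and Lemma~\ref{lemmacmlifting} lifts this back to a $w_M$-fixed $\Q_p$-point on $\X$. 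Combining these, $\Xc(\Q_p) \ne \emptyset$ if and only if $\X$ carries a $w_M$-fixed $\Q_p$-point.

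It then remains to translate ``$w_M$-fixed $\Q_p$-point on $\X$'' into the class-field condition. Such a point furnishes a cyclic $M$-isogeny $E \to E$ defined over $\Q_p$ squaring to $[-M]$, i.e. an embedding $\Q(\sqrt{-M}) \hookrightarrow \End^0(E)$ rational over $\Q_p$; this forces $\sqrt{-M} \in \Q_p$, so $\left(\frac{-M}{p}\right) = 1$ and the reduction is necessarily ordinary. Proposition~\ref{ordinarycm} then identifies the existence of this point with the two conditions $\left(\frac{-M}{p}\right)=1$ and $\Q(H(-M))$ having a prime over $p$ of residue degree one. Finally I would observe that these collapse to one: since $\Q(\sqrt{-M}) \subseteq \Q(H(-M))$ and $p$ is odd with $p \nmid M$ (as $M \mid N$), a residue-degree-one prime of $\Q(H(-M))$ over $p$ restricts to a residue-degree-one prime of $\Q(\sqrt{-M})$, which forces $p$ to split, so $\left(\frac{-M}{p}\right) = 1$ holds automatically. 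This yields precisely the stated criterion.

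The main obstacle is the verification that the relevant $w_M$-fixed point is ordinary, equivalently that supersingular $w_M$-fixed $\F_p$-points do not give rise to local points of the ramified twist. This is exactly the content carried by the residue-degree-one output of Deuring's lifting theorem inside Lemma~\ref{lemmacmlifting}: the CM lift of a supersingular point carries its extra endomorphism only over a residue-degree-two prime and therefore cannot descend to $\Q_p$, so such points are invisible to $\Xc(\Q_p)$ here, and they instead surface in the inert case through the $\frob = w_p$ mechanism. Once this ordinariness is in hand, the class-field reformulation via $H(-M)$ is routine and the collapse of the two conditions is the only remaining bookkeeping.
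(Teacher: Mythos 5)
Your first two paragraphs match the paper's skeleton exactly: the reduction to the quadratic-twist picture over the ramified extension $\K_\nu/\Q_p$, and the equivalence ``$\Xc(\Q_p)\neq\emptyset$ iff $\X$ has a $w_M$-fixed $\Q_p$-point'' obtained by combining Proposition \ref{prop:diag} (reduction) with Lemma \ref{lemmacmlifting} (lifting). But the second half of your argument contains a genuine error that changes the statement being proved. A $w_M$-fixed point of $\X(\Q_p)$ does \emph{not} force $\sqrt{-M}\in\Q_p$: the extra endomorphism $\lambda$ with $\lambda^2=[-M]$ is in general only defined over the quadratic extension $\Q_p(\sqrt{-M})$, and the nontrivial Galois element acts by $\lambda\mapsto-\lambda$, which still fixes the corresponding point $(E,C)$ of $X_0(N)$; rationality of the point on the modular curve does not entail rationality of the endomorphism. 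Concretely, a CM point attached to a class-number-one order $\Z[\sqrt{-M}]$ is a $w_M$-fixed point of $\X(\Q)\subset\X(\Q_p)$ for \emph{every} $p$, including primes inert in $\Q(\sqrt{-M})$, where its reduction is supersingular. So your claims that the reduction ``is necessarily ordinary'' and (last paragraph) that supersingular $w_M$-fixed $\F_p$-points cannot give rise to local points on the ramified twist are both false --- the latter directly contradicts Lemma \ref{lemmacmlifting}, whose Deuring-lifting argument produces, with no ordinariness hypothesis, a lift over a number field $B$ admitting a prime of residue and inertia degree one, whence $B\hookrightarrow\Q_p$.

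Consequently your route through Proposition \ref{ordinarycm} proves the wrong criterion: it inserts the extra condition $\left(\frac{-M}{p}\right)=1$, which is deliberately absent from Proposition \ref{prop:inertram} because the supersingular case must be kept. When $p$ is inert in $\Q(\sqrt{-M})$, the Hilbert class polynomial $H(-M)$ can still have a root mod $p$ (a supersingular $j$-invariant lying in $\F_p$ rather than $\F_{p^2}\setminus\F_p$), and such a root yields a $w_M$-fixed $\F_p$-point and hence a local point on the twist. Your attempted ``collapse'' of the two conditions rests on the false containment $\Q(\sqrt{-M})\subseteq\Q(H(-M))$: the field $\Q(H(-M))=\Q(j)$ admits a real embedding (the paper itself uses that $j(\sqrt{-N})$ is real, in Section 5), so it contains no imaginary quadratic field, and a degree-one prime of $\Q(j)$ above $p$ does not force $p$ to split in $\Q(\sqrt{-M})$. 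The correct conclusion of the argument, as in the paper, is that a $w_M$-fixed $\F_p$-point --- ordinary \emph{or} supersingular --- exists precisely when $H(-M)$ has a root mod $p$, equivalently when $\Q(H(-M))$ has a prime over $p$ of inertia degree one; Lemma \ref{lemmacmlifting} and Proposition \ref{prop:diag} then give the two implications of the proposition without ever needing $\left(\frac{-M}{p}\right)=1$.
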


If $p=2$, by Lemma 4.9 \cite{Ozman}, any $w_M$-fixed point of $\X(F_2)$ can be lifted to an elliptic curve $\tilde{E}$ over a number field $B$ such that $\tilde{E}$ has complex multiplication by the maximal order of $\Q(\sqrt{-M})$. Hence $2$ is unramified in $B$ and $ B \hookrightarrow \Q_2$, there is a $w_M$-fixed point in $\X(Q_2)$.

\section{Bad Primes}

In this section we will deal with the primes diving $N$. We'll assume that these primes are unramified in $\K$. Fix a prime divisor $p=p_{i_0}$ of $N$. As in the case of good primes, if $p$ splits totally in $\K$ then $\K$ embeds in $\Q_p$, hence $\Xc(\Q_p)=\X(\Q_p)\neq \emptyset$. The results in other cases can be summarized as below:


\begin{theorem} \label{thm:mainbad} Let $N=p_1\ldots p_k$ and $p=p_{i_0}$ be an odd prime dividing $N$. Let $S$ be set of indices$i$ such that $p$ is inert in every $\Q(\sqrt{d_i})$ and splits in the rest. 
\begin{enumerate}
\item If $i_0$ is in $S$ (i.e. $p$ is inert in the quadratic field twisting the Galois action by $w_p$) then:
If $p$ is odd, $\Xc(\Q_p) \neq \emptyset$ if and only if $N=p\prod q_i$ or $N=2p\prod q_i$ where $p \equiv 3 \mod 4, q_i \equiv 1 \mod 4$ and $S$ consists of only $i_0$. If $p=2$, $\Xc(\Q_p) \neq \emptyset$ if $N=2\prod q_i$ where $q_i \equiv 1 \mod 4$ and $S$ contains all $i$. (i.e. $2$ is inert in all the quadratic number fields $\Q(\sqrt{d_i})$.) or $S$ contains only $i_0$ ($2$ is inert in only one of the quadratic fields.)
\item If $i_0$ is not in $S$ (i.e. $p$ spits in the quadratic field twisting the Galois action by $w_p$) then:
If $p$ is odd and $S$ contains all $i$ such that $p_i$ is an odd prime dividing $N$ and $i \neq i_0$ i.e. $N=2pM$ or $N=pM$ where $M$ is the product of $q_i$ for $i \in S$ and $q_i \equiv 1 \mod 4$ then $\Xc(\Q_p) \neq \emptyset$. If $p=2$ and $S$ contains all $i$ such that $p_i$ is an odd prime dividing $N$ i.e. $N=pM$ where $M$ is the product of $q_i$ for $i \in S$ and $q_i \equiv 1 \mod 4$ then $\Xc(\Q_p) \neq \emptyset$.
\end{enumerate}
\end{theorem}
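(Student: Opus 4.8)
The plan is to reduce the question to a fixed-point problem on the special fibre of a Deligne--Rapoport model and then to decide which fixed points lift to $\Q_p$-points of the twist. For the nontrivial cases $S\neq\emptyset$, so the completion $\K_\nu$ is the unramified quadratic extension $\Q_{p^2}$ and the decomposition group is generated by $g=\prod_{i\in S}\sigma_i$, acting on residue fields as $\frob$; when $S=\emptyset$ the claim is immediate since $\Xc(\Q_p)=\X(\Q_p)$. As in Section~2 the twist identifies
$$\Xc(\Q_p)=\{x\in\X(\Q_{p^2}) : w_M\bigl(\frob(x)\bigr)=x\},\qquad M=\prod_{i\in S}p_i,$$
and $w_M\circ\frob$ is an involution because $w_M$ is defined over $\Q$. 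I would then invoke the Deligne--Rapoport model of $\X=X_0(pN')$ over $\Z_p$ with $N'=N/p$, whose special fibre is two copies of $X_0(N')_{/\F_p}$ crossing transversally at the supersingular locus $\Sigma$: here $w_p$ interchanges the two components, any $w_d$ with $d\mid N'$ preserves each component, $\frob$ preserves each component on the smooth locus (the components are $\F_p$-rational), and $\frob$ acts on $\Sigma$ as $w_p$ (Chapter~V of \cite{Deligne}, cf.\ Proposition~\ref{unramifiedgood}).

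The proof then splits according to whether $p\mid M$. In case~(1), $i_0\in S$, so $p\mid M$; writing $M=pM'$, the involution $w_M\circ\frob=w_p\circ w_{M'}\circ\frob$ interchanges the two components, hence fixes no smooth point and every fixed point reduces into $\Sigma$. On $\Sigma$ one has $\frob=w_p$, so $w_M\circ\frob$ restricts to $w_{M'}$, and a fixed supersingular point is exactly an optimal embedding of $\Z[\sqrt{-M'}]$ into an Eichler order of level $N'$ in the quaternion algebra ramified at $\{p,\infty\}$, which I would analyse by Eichler's theorem precisely as in the proof of Proposition~\ref{unramifiedgood}. In case~(2), $i_0\notin S$ and $p\nmid M$, so $w_M\circ\frob$ \emph{preserves} each component and acts on a single copy of $X_0(N')_{/\F_p}$ as the good-reduction twisted involution studied in Section~2, whose ordinary fixed points are governed by Lemma~\ref{lemmacmlifting} and Proposition~\ref{ordinarycm} applied to $X_0(N')$ at the now-good prime $p$.

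The decisive step, and the one I expect to be the main obstacle, is deciding when a fixed point of the special fibre lifts to a genuine twisted $\Q_p$-point. For case~(1) this is a local computation at a node: the arithmetic surface there is an ordinary double point $xy=p^{e}$ whose thickness $e$ equals $\tfrac12\#\Aut(E,C)$, and a point of $\X(\Q_{p^2})$ reducing into the node has $v(x)+v(y)=e$ with $v(x),v(y)\geq 1$, so it exists only when $e\geq 2$, i.e.\ only at a supersingular point carrying an extra automorphism. This occurs solely for CM by $\Z[i]$ (thickness $2$, forcing $j=1728$ and $p\equiv 3\bmod 4$) or by $\Z[\zeta_3]$ (thickness $3$, $j=0$, $p\equiv 2\bmod 3$), and both require $M'=1$, that is $S=\{i_0\}$. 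On the two branches $w_p\circ\frob$ acts by $(x,y)\mapsto(\bar y,\bar x)$, so the twisted condition becomes $x\bar x=p^{e}$, solvable precisely in this thickened situation; and $\Z[i]\hookrightarrow$ the Eichler order of level $N'$ is possible iff every odd $q_i\mid N'$ is split or ramified in $\Q(i)$, i.e.\ $q_i\equiv 1\bmod 4$ (the prime $2$ being allowed since it ramifies). This is what produces the exact list $N=p\prod q_i$ or $2p\prod q_i$ with $p\equiv 3\bmod 4$ and $q_i\equiv 1\bmod 4$, and it simultaneously yields the ``only if'' direction, since any larger $S$ destroys the extra automorphism and hence all liftable fixed points.

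Finally, in case~(2) only sufficiency is required, so it suffices to exhibit one point. Under the hypothesis that every odd prime $q_i\neq p$ dividing $N$ lies in $S$ and is $\equiv 1\bmod 4$, I would verify that the existence criteria of Proposition~\ref{ordinarycm} for $X_0(N')$ (equivalently the optimal-embedding conditions into the Eichler order of level $N'$) hold: the primes $q_i\mid M$ are ramified in $\Q(\sqrt{-M})$ and so impose no obstruction, while the congruences $q_i\equiv 1\bmod 4$ pin down the Legendre symbols at the remaining places by quadratic reciprocity. This produces a $w_M\circ\frob$-fixed ordinary point on one component, which I then lift by Deuring's theorem (Lemma~\ref{lemmacmlifting}), together with its $C_p=\ker\frob$ level-$p$ structure, to a point of $\Xc(\Q_p)$. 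This reciprocity bookkeeping is routine but must be carried out with care. The prime $p=2$ is handled separately, using the special structure of the endomorphism rings in characteristic $2$ recorded in Proposition~\ref{ordinarycm}, following \cite{Ozman}, Section~4.
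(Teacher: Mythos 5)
Your skeleton (Deligne--Rapoport model, twisted Frobenius, the dichotomy $p\mid M$ versus $p\nmid M$, Frobenius acting as $w_p$ on the supersingular locus) matches the paper's, and your node-coordinate computation is essentially the alternative route of \cite{Jim} rather than the paper's blow-up argument; but it contains a genuine error in case (1). You claim the twisted equation $x\bar x=p^{e}$ at a node of thickness $e=\tfrac12|\Aut(E,C)|$ is ``solvable precisely in this thickened situation'' and list \emph{both} $\Z[i]$ ($e=2$) and $\Z[\zeta_3]$ ($e=3$) as sources of liftable fixed points. Since the extension is unramified, $x\bar x$ is a norm from $\Q_{p^2}$ and $2v(x)=e$, so solvability forces $e$ to be \emph{even}: the $j=0$, thickness-$3$ case yields no $\Q_p$-point, and your own norm equation rules it out. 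This is not cosmetic: if the $\Z[\zeta_3]$ case contributed points, the ``only if'' of part (1) would be false (twists with $p\equiv 2\bmod 3$ would acquire points with no $q_i\equiv 1\bmod 4$ condition), so the exclusion must be stated and proved. The paper reaches the same exclusion by blowing up to the regular model: when $|\Aut(x)|=6$ the two exceptional lines are interchanged by the twisted Frobenius and only their singular intersection is fixed, whereas $4\mid|\Aut(x)|$ gives an odd chain with a Galois-stable middle line carrying smooth $\F_p$-points (Proposition~\ref{prop:degree}). Separately, your assertion that a $w_{M'}$-fixed point with CM by $\Z[i]$ ``requires $M'=1$'' is precisely the simultaneous-embedding statement (Proposition~\ref{prop:jim}); it is the crux of the ``only if'' and cannot be waved at, and for $p=2$ it admits the second alternative $M'=N'$, i.e.\ $S$ equal to all indices --- a subcase of part (1) your sketch never treats.

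In case (2) your plan is genuinely different from the paper's and, as written, it fails. You propose to produce an \emph{ordinary} $w_M$-fixed point via Proposition~\ref{ordinarycm} and claim the congruences $q_i\equiv 1\bmod 4$ ``pin down the Legendre symbols by quadratic reciprocity.'' They do not: the criteria of Proposition~\ref{ordinarycm} are that $\left(\frac{-M}{p}\right)=1$ and that $H(-M)$ has a root mod $p$, which are splitting conditions \emph{at} $p$ in the ring class field of $\Z[\sqrt{-M}]$ and are independent of the residues of the $q_i$ modulo $4$. Already for $N=pq$, $M=q\equiv 1\bmod 4$, one has $\left(\frac{-q}{p}\right)=\left(\frac{-1}{p}\right)\left(\frac{p}{q}\right)$, which can be $-1$; no reciprocity bookkeeping rescues this. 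The paper instead stays on the supersingular locus: since $\frob=w_p$ there, a $w_M\circ\frob$-fixed supersingular point is a $w_{Mp}$-fixed one, and the hypothesis that $S$ exhausts the odd primes dividing $N/p$ makes $Mp=N$ or $N/2$ --- exactly the window in which Proposition~\ref{prop:jim} permits $\Z[\sqrt{-Mp}]$ and $\Z[i]$ to embed simultaneously into the Eichler order of level $N/p$, whence a smooth fixed point (Proposition~\ref{onesplitsuper}; note this also needs $p\equiv 3\bmod 4$ for odd $p$). Your case (2) should be rerouted through this supersingular argument; the ordinary route can at best supplement it as a further sufficient condition, as the paper remarks after Proposition~\ref{onesplitsuper}.
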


In this case we are restricting ourselves to primes unramified in each $\Q(\sqrt{d_i})$. Therefore, we have the theory of Galois descent, we can use Hensel's Lemma. Note that since we are in the case of bad reduction, the first thing we need is a regular model for $\Xc_{/ \F_p}$ which is the twist of the regular model of $\X_{/\F_p}$. Deligne-Rapaport gives a regular model of $\X_{/\F_p}$. This model consists of two copies of $X_0(N/p)_{/\F_p}$ glued along supersingular points and becomes regular after blowing up $(\frac{|\Aut(x)|}{2}-1)$-many times at each intersection point. For instance if every intersection point has automorphism group $\{\pm 1\}$ then the model is already regular. Using this model and Hensel's Lemma we can conclude that there is a $\Q_p$-point on $\Xc$ if and only if there is a smooth point in $\Xc(\F_p)$.

Let $S$ be the set of indices $i$ such that $p=p_{i_0}$ is inert in $\Q(\sqrt{d_i})$. We will study the cases $i_0 \in S$ and $i_0 \notin S$ separately. 

\subsection {$i_0 \in S$}:  Then the decomposition group of $p$ in $\K$ is $\{1, \prod\limits_{i \in S} \sigma_{d_i}\}$ and an $\F_p$-rational of $\Xc$ is a point in $\X(\F_{p^2})$ fixed by $ \prod\limits_{i \in S} w_{p_i} \circ \frob=w_M \circ \frob$ where $M=\prod \limits_{i \in S} p_i$. Note that $p|M$. Since $w_p$ interchanges each branch, $X_0(N/p)$ and frobenius acts on each branch, a $w_M \circ \frob$-fixed point has to be an intersection point i.e. a supersingular point. Moreover frobenius acts as $w_p$ on the set of supersingular points(as mentioned in the section of good reduction), hence $\Xc(\F_p)$ is nonempty if and only is there is a smooth supersingular point in $X_0(N/p)(\F_{p^2})$ fixed by $w_p \circ w_M= w_{M/p}$.

\begin{proposition}\label{prop:degree}
Using the notation above, there is a $w_{M/p}$ fixed smooth supersingular point in $X_0(N/p)(\F_{p^2})$ if and only if there is a Eichler order $O$ of level $N/p$ in the quaternion algebra ramified at $p$ and infinity such that both $\Z[\sqrt{-M/p}]$ and $\Z[i]$ embed simultaneously into $O$.
\end{proposition}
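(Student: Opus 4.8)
The plan is to read off both conditions from the endomorphism ring of the relevant supersingular pair, using the Deuring correspondence together with Eichler's optimal embedding theorem for the ``$w_{M/p}$-fixed'' part, and a local analysis of the twisted Deligne--Rapoport model for the word ``smooth.''

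First I would recall the moduli dictionary already used in the proof of Proposition~\ref{unramifiedgood}. A supersingular point of $X_0(N/p)$ over $\F_{p^2}$ is a pair $(E,C)$ with $E$ supersingular and $C$ cyclic of order $N/p$, and $O:=\End(E,C)$ is an Eichler order of level $N/p$ in the quaternion algebra $B$ ramified exactly at $p$ and $\infty$; conversely, by the Deuring correspondence every such order is realized, up to conjugacy, as some $\End(E,C)$. By Eichler's optimal embedding theorem the point $(E,C)$ is fixed by $w_{M/p}$ if and only if $\Z[\sqrt{-M/p}]$ embeds into $O$. This settles the ``fixed'' half and identifies the order $O$ of the statement with $\End(E,C)$.

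The heart of the matter is to see that the word ``smooth'' — meaning that the regular model of $\Xc$ acquires a smooth $\F_p$-point at $(E,C)$ — is governed by the parity of the thickness of the corresponding node. In the regular model recalled above, $(E,C)$ is a node joining the two copies of $X_0(N/p)$, of thickness $e=|\Aut(E,C)|/2$, and resolving it inserts a chain of $e-1$ copies of $\mathbb{P}^1$. As noted in the discussion preceding the proposition, the twisting involution $\iota=w_M\circ\frob$ fixes $(E,C)$ while interchanging the two branches (since $w_p\mid w_M$ swaps the two components and $w_{M/p}\circ\frob$ fixes the point), so $\iota$ acts on this exceptional chain as a Frobenius-semilinear reflection. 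I would then split into cases by the parity of $e$: when $e$ is odd the reflection fixes only a central node (for $e=1$ this is the original node, with its two conjugate branches) and permutes the flanking $\mathbb{P}^1$'s in conjugate pairs, so the descended fibre is non-split there and its only $\F_p$-point is singular; when $e$ is even the reflection stabilizes a central component, which therefore descends to a smooth geometrically connected genus-zero curve over $\F_p$ — necessarily isomorphic to $\mathbb{P}^1_{\F_p}$, since every conic over a finite field has a rational point — and hence carries smooth $\F_p$-points away from the two nodes where it meets its neighbours. Thus the twist is smooth at $(E,C)$ exactly when $e$ is even.

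Finally I would convert ``$e$ even'' into the second embedding. For odd $p$ one has $\Aut(E)\in\{\Z/2,\Z/4,\Z/6\}$, so $e\in\{1,2,3\}$, and since $\Aut(E)$ is cyclic, $e$ is even precisely when $\Aut(E,C)$ contains an element of order four, i.e. precisely when $\Z[i]$ embeds into $O=\End(E,C)$; requiring the embedding into $O$ rather than merely $\End(E)$ is exactly the condition that this order-four automorphism preserve the level structure $C$. The case $p=3$, where the supersingular $j=0=1728$ carries a larger automorphism group, is checked directly and again yields $e$ even together with $\Z[i]\hookrightarrow O$. Combining the two halves, a smooth $w_{M/p}$-fixed supersingular point exists if and only if some pair $(E,C)$ — equivalently, by the Deuring correspondence, some Eichler order $O$ of level $N/p$ in $B$ — admits simultaneous embeddings of both $\Z[\sqrt{-M/p}]$ and $\Z[i]$. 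The main obstacle I anticipate is precisely the local computation of the third paragraph: making rigorous how the semilinear reflection $\iota$ acts on the resolved chain, and verifying that a stabilized central component really descends to a genus-zero $\F_p$-curve carrying smooth rational points of the model.
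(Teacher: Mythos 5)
Your proof is correct and takes essentially the same route as the paper's: Eichler's embedding criterion handles the $w_{M/p}$-fixed condition, and smoothness on the twisted regular model is read off from the parity of the chain of $\frac{|\Aut(x)|}{2}-1$ exceptional lines, with a Galois-fixed central line (hence smooth $\F_p$-points) existing exactly when $\Aut(E,C)$ contains an element of order four, i.e.\ when $\Z[i]\hookrightarrow\End(E,C)$. Your uniform reflection-on-the-chain argument merely repackages the paper's case analysis ($|\Aut(x)|=2$, $4n$, $6$), while your explicit appeals to the Deuring correspondence (for realizing every Eichler order as some $\End(E,C)$) and to the triviality of forms of $\mathbb{P}^1$ over finite fields make rigorous two points the paper leaves implicit.
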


\begin{proof}
Let $x$ be a supersingular $w_{M/p}$-fixed point on $X_0(N)(\F_{p^2})$. This is equivalent to say that $\Z[\sqrt{-M/p}]$ embeds in $\End(x)$, which is an Eichler order of level $N/p$ in the quaternion algebra ramified at $p$. We will show that in order to be smooth $\Z[i]$ must embed in $\End(x)$ as well.

Recall that at each singular(hence supersingular) point $x$ we have $(|\Aut(x)|/2-1)$-many exceptional lines. The automorphism group of an elliptic curve over a field of characteristic $q$ is $\mu_2,\mu_4$ or $\mu_6$ if $q$ is not $2$ or $3$ where $\mu_s$  denotes the group of primitive $s$-th roots of unity. If $q=2$ or $3$ and $E$ is the unique supersingular elliptic curve in characteristic $q$ then $\Aut(E)$ is $C_3 \rtimes \{\pm1,\pm i,\pm j,\pm k\}$ or $C_3 \rtimes C_4 $ respectively, where $C_m$ denotes the cyclic group of order $m$. 

Therefore if $|\Aut(x)|=4n$ for $n>1$, there is an element of order $4$ in $\Aut(x)$ and the number of blow-ups is $2n-1$ which is odd. Since we have odd number of exceptional lines, there is one line $L_{/ \F_p}$ that is fixed by the Galois action. On this line the there are $p+1$ rational points, two of which are singular. 
Therefore if $\Z[i]$ embeds in $\End(x)$ then $x$ is a smooth point. 

Conversely any smooth point is of this form. If $|\Aut(x)|$ is $2$, then the model of $X_0(N)/\Z_p$ is already regular hence no need to blow-up, $x$ is singular. If $|\Aut(x)|=6$, then we replace this point by 2 exceptional lines over $\F_p$ and $\frob \circ w_N$ interchanges these lines. Each of these exceptional lines cuts one of the branches and also the other exceptional line once. Denote the intersection point of these lines by $x'$. Then $x'$ induces an $\F_p$-rational point of $\Xc$. However, it is a singular point. 

Hence $x$ is a smooth point on $\Xc(\F_p)$ if and only if both $\Z[\sqrt{-M/p}]$ and $\Z[i]$ embed in $\End(x)$. 
\end{proof}

 There is an Eichler order $O$ of level $N/p$ in the quaternion algebra $\Q_{p,\infty}$  such that $\Z[i]$ embeds in $O$ if and only if $N=p\prod q_i$ or $N=2p\prod q_i$ where $p \equiv 3 \mod 4$ and $q_i \equiv 1 \mod 4$. Another order $\Z[\sqrt{-M/p}]$ also embeds into $O$ whenever $O$ has an element $a$ of norm $-M/p$. If we consider the $\Z$-module generated by $1,i,a,ia$ we get relations between discriminant of $O$ and the $\Z$-module. Using this idea and properties of quaternion arithmetic we can deduce the following result, a nice proof of which is also given in \cite{Jim}.
 
 \begin{proposition}\label{prop:jim}\emph{(\cite{Jim} Corollary 4.2.3)}
 Let $B$ be a definite quaternion algebra ramified at $D'$  and let $O$ be an Eichler order of $B$ of squarefree level $N'$ such that $\Z[i] \hookrightarrow O$. If $m | D'N'$ and $m \neq 1$ then $\Z[\sqrt{-m}] \hookrightarrow O$ if and only if $m=D'N'$ or $2|D'N'$ and $m=D'N'/2$.
 \end{proposition}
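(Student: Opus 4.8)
The plan is to reduce the whole statement to the existence of a single trace-zero quaternion of prescribed reduced norm, and then to analyze the order it generates together with $i$. Concretely, an embedding $\Z[\sqrt{-m}]\hookrightarrow O$ is the same datum as an element $a\in O$ with $\operatorname{Trd}(a)=0$ and $\operatorname{Nrd}(a)=m$ (equivalently $a^2=-m$), while the hypothesis $\Z[i]\hookrightarrow O$ supplies $i\in O$ with $i^2=-1$. Both implications are studied through the $\Z$-module $\Lambda=\Z+\Z i+\Z a+\Z ia$ indicated in the discussion preceding the statement.

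For necessity I would argue as follows. Since $m$ is squarefree and $m\neq 1$, the fields $\Q(\sqrt{-m})$ and $\Q(i)$ differ, so $a\notin\Q(i)$; hence $a$ and $i$ do not commute and $1,i,a,ia$ are linearly independent, making $\Lambda$ a full order inside $O$. Setting $t:=\operatorname{Trd}(ia)\in\Z$, the Gram matrix of the reduced trace form in the basis $1,i,a,ia$ has determinant $(4m-t^2)^2$, so $\mathfrak{d}(\Lambda)=|4m-t^2|$; and since $B$ is definite while $ia\notin\Q$, the minimal polynomial $x^2-tx+m$ of $ia$ has negative discriminant, giving $4m-t^2>0$. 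From $\Lambda\subseteq O$ we obtain $D'N'=\mathfrak{d}(O)\mid 4m-t^2$. Writing $D'N'=mk$ with $\gcd(m,k)=1$ (legitimate because $m\mid D'N'$ and $D'N'$ is squarefree) and $4m-t^2=(D'N')\,r$, the identity $t^2=m(4-kr)$ forces $kr\le 4$, hence $k\in\{1,2,3\}$; the case $k=3$ forces $r=1$ and $t^2=m$, which would make the squarefree integer $m$ a nonzero perfect square, a contradiction. Thus $k\in\{1,2\}$, i.e. $m=D'N'$ or $m=D'N'/2$ with $2\mid D'N'$. I want to stress that $\operatorname{Nrd}(i)=1$ is exactly what keeps $\mathfrak{d}(\Lambda)$ as small as $4m-t^2$, so that the hypothesis $\Z[i]\hookrightarrow O$ is precisely what forces the conclusion; this inequality records a global constraint that the local embedding conditions do not detect.

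For sufficiency I would, conversely, assume $m=D'N'$ or $m=D'N'/2$. Local embeddability of $\Z[\sqrt{-m}]$ into $O_p$ is easy to verify at every prime: at each odd $p\mid D'N'$ one has $p\mid m$, so $p$ ramifies in $\Q(\sqrt{-m})$ and the relevant Eichler local embedding number is positive, while $p\nmid D'N'$ is automatic and $p=2$ is checked by hand using the parity constraints on $D',N'$ imposed by $\Z[i]\hookrightarrow O$. To produce an actual element $a\in O$, I would take the two-sided ideal of reduced norm $m$, namely the product of the ramified local ideals $P_p$ over $p\mid m$, argue that it is principal, and correct a generator by the order-four unit $i\in O^{\times}$ so that it acquires trace zero; the resulting $a$ satisfies $a^2=-m$.

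The hard part is this final global step. For definite quaternion orders the local-to-global principle for embeddings genuinely fails; for example, for $B$ ramified at $\{3,\infty\}$ and $O$ an Eichler order of level $13$, the order $\Z[\sqrt{-13}]$ embeds everywhere locally but not into $O$, so local positivity cannot be the whole story, and indeed it is the definiteness inequality of the necessity argument that detects the obstruction. One must therefore show that this obstruction vanishes exactly for $m\in\{D'N',D'N'/2\}$, which is what the two-sided-ideal construction is designed to accomplish; a clean alternative treatment is given in \cite{Jim}.
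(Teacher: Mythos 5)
Your ``only if'' direction is correct and complete, and it is precisely the argument the paper gestures at just before the statement; note that the paper itself does not prove this proposition at all --- it quotes it from \cite{Jim}, Corollary 4.2.3, remarking only that one should consider the $\Z$-module generated by $1,i,a,ia$ and its discriminant relative to that of $O$. Your computation that $\Lambda=\Z+\Z i+\Z a+\Z ia$ is a full order with reduced discriminant $|4m-t^2|$ where $t=\operatorname{Trd}(ia)$, that definiteness forces $4m-t^2>0$, and that $D'N'\mid 4m-t^2$ together with $m\mid t^2$ and squarefreeness pins $k=D'N'/m$ to $\{1,2\}$ is sound. The one subcase you skip, $k=2$ and $r=1$ (so $t^2=2m$), is harmless, since $k=2$ already yields the conclusion $m=D'N'/2$ (and it is in any case impossible: $D'N'=2m$ squarefree forces $m$ odd, so $2m$ cannot be a square). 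On this half you supply strictly more detail than the paper does.

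The genuine gap is the ``if'' direction, and you flag it yourself. Two steps are asserted with no argument: (a) that the two-sided ideal of reduced norm $m$ is principal --- definite quaternion orders typically have nontrivial class groups, and nothing in your write-up extracts principality from the hypothesis $\Z[i]\hookrightarrow O$; your own counterexample ($D'=3$, $N'=13$, $m=13$, everywhere locally embeddable but globally obstructed) shows the statement is controlled by global, not local, data, so some genuinely global input is unavoidable exactly at this point; and (b) that a generator $\alpha$ with $\operatorname{Nrd}(\alpha)=m$ can be ``corrected by the order-four unit $i$'' to have trace zero. Multiplying by units in $\{\pm1,\pm i\}$ only replaces $\operatorname{Trd}(\alpha)$ by $\pm\operatorname{Trd}(\alpha)$ or $\pm\operatorname{Trd}(i\alpha)$, neither of which need vanish; nor can one fall back on a clean $\Z[i]$-module decomposition of $O$ in which the desired element anticommutes with $i$, because it need not: in the Hurwitz order ($D'N'=2$, $m=2$) the embedding of $\Z[\sqrt{-2}]$ is realized by $i+j$, which neither commutes nor anticommutes with $i$ --- indeed $\operatorname{Trd}\bigl(i(i+j)\bigr)=-2$, i.e.\ $t=-2$ in your necessity computation, which is exactly what keeps that computation consistent. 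So the sufficiency half of your proposal is a plan whose crux is unproved; since the paper likewise offers only the citation, your attempt matches the paper in outcome there, but the missing content is precisely Corollary 4.2.3 of \cite{Jim}, and as written your argument is not self-contained.
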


Combining Proposition \ref{prop:jim}, Proposition \ref{prop:degree} and the observation following Proposition \ref{prop:degree} we obtain the first part of Theorem \ref{thm:mainbad}.

\subsection{$i_0 \notin S$} Then the decomposition group of $p=p_{i_0}$ in $\K$ is $\{1, \prod\limits_{i \in S} \sigma_{d_i}\}$ and an $\F_p$-rational of $\Xc$ is a point in $\X(\F_{p^2})$ fixed by $ \prod\limits_{i \in S} w_{p_i} \circ \frob=w_M \circ \frob$ where $M=\prod \limits_{i \in S} p_i$. Note that $p \nmid M$. Note that since $w_{p}$ is the only involution that interchanges the branches, $\frob \circ w_M$ doesn't interchange the branches. Hence an $\frob \circ w_M$-fixed point need not be a supersingular point. This is the main difference with the case $i_0 \in S$. However, as the proposition below shows, we can still find a smooth $\frob \circ w_M$-fixed point among the supersingular points in some cases.

\begin{proposition}\label{onesplitsuper}
Using the notation above, $\Xc(\Q_p)$ is nonempty if we are in one of the following cases:
\begin{itemize}
\item $p \equiv 3 \mod 4$, $N=p\prod q_i$ or $N=2p\prod q_i$ with $q_i \equiv 1 \mod 4$  and $M=\prod q_i$
\item $p=2$ and $N=2\prod q_i$ with $q_i \equiv 1 \mod 4$  and $M=\prod q_i$
\end{itemize}
\end{proposition}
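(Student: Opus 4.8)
The plan is to mirror the strategy used for the case $i_0 \in S$ (Propositions \ref{prop:degree} and \ref{prop:jim}): in each listed case I would exhibit a single supersingular point of the special fiber that is simultaneously fixed by $w_M \circ \frob$ and smooth after blow-up, so that Hensel's lemma produces a point of $\Xc(\Q_p)$. The difference from the $i_0\in S$ case is that now $p \nmid M$, so $w_M\circ\frob$ preserves each branch and an $\F_p$-point need not be supersingular; nevertheless a supersingular one can be forced to exist.

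First I would record how $w_M \circ \frob$ acts on the supersingular locus. As in the good-reduction discussion, frobenius acts on the set of supersingular (intersection) points of the Deligne--Rapoport model as $w_p$; since $p \nmid M$, on this locus we have $w_M \circ \frob = w_M \circ w_p = w_{Mp}$. A supersingular point $x=(E,C)$, with $C$ cyclic of order $N/p$ and $\End(x)$ an Eichler order $O$ of level $N/p$ in the definite quaternion algebra $\Q_{p,\infty}$, is therefore $w_M\circ\frob$-fixed if and only if $\Z[\sqrt{-Mp}]$ embeds into $O$, by Eichler's optimal embedding theorem (exactly as in the proof of Proposition \ref{unramifiedgood}).

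Next I would transport the smoothness criterion of Proposition \ref{prop:degree} to this setting. The number of exceptional lines above $x$ is $|\Aut(x)|/2 - 1$, which is odd precisely when $4 \mid |\Aut(x)|$, i.e.\ when $\Z[i]\hookrightarrow\End(x)$; in that case $w_{Mp}$ permutes an odd number of lines and hence fixes one line $L_{/\F_p}$, on which smooth $\F_p$-rational points of the twist appear just as in Proposition \ref{prop:degree}. Thus it suffices to produce an Eichler order $O$ of level $N/p$ in $\Q_{p,\infty}$ into which both $\Z[i]$ and $\Z[\sqrt{-Mp}]$ embed. For $p$ odd with $p\equiv 3\pmod 4$ and $N=p\prod q_i$ or $N=2p\prod q_i$ with $q_i\equiv 1\pmod 4$, the existence statement preceding Proposition \ref{prop:jim} furnishes such an $O$ with $\Z[i]\hookrightarrow O$. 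Taking $D'=p$ and $N'=N/p$, so $D'N'=N$, one checks that $Mp$ equals $N=D'N'$ in the first case and $N/2=D'N'/2$ (with $2\mid D'N'$) in the second; either way Proposition \ref{prop:jim} yields $\Z[\sqrt{-Mp}]\hookrightarrow O$, and the associated supersingular point is smooth and $w_M\circ\frob$-fixed. For $p=2$ and $N=2\prod q_i$ with $q_i\equiv 1\pmod 4$, I would argue directly that $\Z[i]$ embeds into an Eichler order $O$ of level $N/2$ in $\Q_{2,\infty}$ --- at the ramified prime $2$ because $2$ ramifies in $\Q(i)$, and at each $q_i$ because $q_i\equiv 1\pmod 4$ makes $-1$ a square --- and then apply Proposition \ref{prop:jim} with $D'N'=N$ and $m=Mp=2M=N$.

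The main obstacle I anticipate is the bookkeeping at $p=2$: the unique supersingular curve there has $|\Aut|=24$, so I must verify that the chosen level structure $C$ of order $N/2$ is preserved by an automorphism of order $4$ (so that $4\mid|\Aut(E,C)|$ and the blow-up count stays odd), and that the line-counting argument of Proposition \ref{prop:degree}, stated for $w_{M/p}$, goes through verbatim with $w_{Mp}$ and the frobenius twist. The odd-$p$ cases reduce cleanly to the cited results, so the substance of the proof is exactly the simultaneous-embedding input of Proposition \ref{prop:jim} together with the branch-preserving behaviour of $w_M\circ\frob$ recorded at the start.
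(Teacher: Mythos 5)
Your proposal is correct and follows essentially the same route as the paper's (much terser) proof: reduce to the existence of a smooth supersingular point fixed by $w_M \circ \frob = w_{Mp}$ (using that frobenius acts as $w_p$ on the supersingular locus and $p \nmid M$), translate smoothness and fixedness into the simultaneous embedding of $\Z[i]$ and $\Z[\sqrt{-Mp}]$ into an Eichler order of level $N/p$ in $\Q_{p,\infty}$, and conclude via Proposition \ref{prop:jim} with $Mp = D'N'$ or $D'N'/2$. Your explicit verification of the $m$-values in each case and your flagged bookkeeping at $p=2$ are details the paper leaves implicit, but the substance is identical.
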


\begin{proof}
As noted above there exists a smooth supersingular point $x$ in $\Xc(\F_{p})$ if and only if $\Z[i]$ embeds in $\End(x)$ and $x$ is fixed by $\frob \circ w_M=w_{Mp}$ since frobenius acts on $x$ as $w_{p}$. This is equivalent to embed both $\Z[i]$ and $\Z[\sqrt{-Mp}]$ into $\End(x)$ which happens if and only if we have the conditions above as cited in Proposition \ref{prop:jim}. 
\end{proof}

Remark: Note that Proposition \ref{onesplitsuper} is not an if and only if statement. If the conditions of the theorem fails, this only tells that there aren't any smooth supersingular point in $\Xc(\F_{p})$. But there may still be ordinary points(which are necessarily smooth) which can be lifted to $\Xc(\Q_{p}$. For instance, by Proposition \ref{ordinarycm}, we have the necessary and sufficient conditions to lift a $w_{p}$ fixed point of $\X(\F_p)$ to a $w_{p}$-fixed point of $\X(\Q_p)$, which gives a point in $\Xc(Q_p)$.

Remark: This corrects conditions given in Theorem 3.7 in \cite{Ozman}.

\section{Algorithm}

In this section we give an algorithm (using the results of previous sections) which produces $\Xc$ with $\Q_p$ points for every prime $p$. More precisely the input of the algorithm is:

 Pairwise relatively prime squarefree positive integers $(m_1,\ldots,m_k)$ and its output is polyquadratic fields $\K=\Q(\sqrt{d_1},\ldots,\sqrt{d_k})$ such that $\Xc(\Q_p)\neq \emptyset$ for all primes $p$ where $N=\prod\limits_{i=1}^k m_i$ and $\zeta:G_{\Q} \rightarrow \Aut(\X)$ such that $\sigma \mapsto \sigma \circ w_{m_i}$ if $\sigma(\sqrt{d_i})=-\sqrt{d_i}$ and $1$ otherwise. Note that $\sigma_i$ is the Galois map that sends $\sqrt{d_i}$ to $-\sqrt{d_i}$.

Given $N=\prod\limits_{i=1}^k m_i$ as above proceed as below:

\begin{enumerate}
\item Choose $d_i$ such that there is no prime simultaneously ramified in $\Q(\sqrt{d_i})$ and $\Q(\sqrt{d_j})$ for $i \neq j$.
\item For all $p|N$ choose $d_i$ such that $p$ splits in each $\Q(\sqrt{d_i})$. Since there are finitely many such $p$, it is possible to choose $d_i$ accordingly.
\item For all $p \nmid N$ and $p < 4g^2$ where $g$ is the genus of $X_0(N)$, choose $(d_1,\ldots,d_k)$ such that $p$ is inert in $\Q(\sqrt{d_i})$ for all $i$ in $[1, r]$ or $p$ splits completely in $\K$. 

\item  For all $p \nmid N$, $p > 4g^2$ and $p$ ramified in one(and only one by first step) of the $\Q(\sqrt{d_i})$ the Hilbert class polynomial of reduced discriminant $-m_i$ has a root mod $p$.
\end{enumerate}

\section{Density Results}

In this section we give an asymptotic for the number of bi-quadratic twists $\Xc$ that has local points everywhere but no global points. To make things more concrete we will make the following assumptions on $N$ but similar asymptotic can be find for other $N$ as well.

\begin{itemize}
\item $m_1,m_2,d_1$ be primes congruent to $1$ mod $4$ and $m_1 \equiv m2 \mod 8$ such that $\left( \frac{d_1}{m_i} \right ) =1$ and $\left( \frac{-2m_1}{m_2} \right)=1$ 

\item Hilbert class polynomial of discriminant $-4m_1$ has a root modulo $d_1$

\item Every prime $p<4g^2$ splits in $\Q(\sqrt{d_1})$ where $g$ is the genus of $\X$. 
\end{itemize}

The curve $\Xc$ is the twist of $\X$ via the cocyle $\zeta$ which sends $\sigma$ to $w_{m_1}$ if $\sigma(\sqrt{d_1})=-\sqrt{d_1}$, to $w_{m_2}$ if $\sigma(\sqrt{d_2})=-\sqrt{d_2}$ and trivial otherwise.

Given a positive integer $X$, let $A'$ be the set of positive squarefree integers $d_2\leq X$ such that $\Xc(\Q_p)$ is non-empty for all $p$ when there is no prime $p$ simultaneously ramified in $\Q(\sqrt{d_1},\sqrt{d_2})$ and $\Q(\sqrt{-N})$.

\begin{proposition}\label{prop:densityodd}

Keeping the notation as above, we have that 

$$|A'|=M_{S_N} \frac{X}{\log^{1-\alpha}X}+ O\left (\frac{X}{\log^{2-\alpha}X} \right )$$ where $\alpha$ is the density of the set of primes $p$ such that Hilbert class polynomial of reduced discriminant $-m_2$ has a root mod $p$ and $\left( \frac{d_1}{p} \right)=1$.

\end{proposition}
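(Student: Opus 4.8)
The plan is to count squarefree $d_2 \le X$ by translating the "local points everywhere, no global points" condition into a set of congruence and Chebotarev-type conditions on $d_2$, and then to apply a Landau–Selberg–Delange style tauberian argument to extract the main term. First I would fix, once and for all, the data $m_1, m_2, d_1$ and the resulting curve $\Xc$; the hypotheses guarantee that local solubility at the primes dividing $N$, at the small primes $p < 4g^2$, and at the primes ramified in $\Q(\sqrt{d_1})$ is already forced independently of $d_2$ (via Theorem \ref{thm:mainbad}, step (2)–(3) of the algorithm, and Proposition \ref{prop:inertram} respectively). So the only remaining local conditions genuinely constrain $d_2$, and these come from the good, unramified primes $p \nmid N$ with $p > 4g^2$ where $p$ is inert in $\Q(\sqrt{d_2})$: by Proposition \ref{prop:unramifiedgood} the relevant splitting/embedding condition is exactly that the Hilbert class polynomial of reduced discriminant $-m_2$ has a root mod $p$ together with $\left(\frac{d_1}{p}\right)=1$, which by definition happens for a set of primes of density $\alpha$.

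Next I would encode membership in $A'$ as a multiplicative condition on the prime factorization of $d_2$. The key point is that $d_2$ is squarefree, and for each prime $\ell \mid d_2$ the field $\Q(\sqrt{d_1},\sqrt{d_2})$ has a prescribed splitting type at $\ell$; local solubility of $\Xc$ at such $\ell$ imposes that $\ell$ lie in the density-$\alpha$ set described by $\alpha$ above (or be handled by the finitely many excluded primes). Thus, up to the finite set $S_N$ of bad/small primes where the condition is fixed, $d_2 \in A'$ is governed by requiring every prime factor of $d_2$ to lie in a Frobenius set $\mathcal{P}_\alpha$ of density $\alpha$ inside a suitable abelian (indeed polyquadratic-plus-ring-class-field) extension. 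The constant $M_{S_N}$ is then the Euler-product correction coming from the finite set of primes in $S_N$ whose behaviour is prescribed by the standing hypotheses on $m_1,m_2,d_1$.

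The analytic heart is to count squarefree integers all of whose prime factors lie in a set $\mathcal{P}_\alpha$ of Dirichlet density $\alpha$. For this I would form the Dirichlet series $F(s)=\sum_{d_2 \in A'} d_2^{-s}$ and show it factors as $F(s) = G(s)\, \zeta(s)^{\alpha}$-type behaviour, i.e. $F(s)$ extends to a function with a branch-point singularity of exponent $\alpha$ at $s=1$, because the underlying prime set has density $\alpha$ and we are imposing a squarefree (so Euler product $\prod_{\ell \in \mathcal{P}_\alpha}(1+\ell^{-s})$) condition. The Selberg–Delange method then yields
\[
|A'| = \sum_{d_2 \le X} 1 = M_{S_N}\,\frac{X}{\log^{1-\alpha}X} + O\!\left(\frac{X}{\log^{2-\alpha}X}\right),
\]
with $M_{S_N}$ identified as the leading Delange constant divided by $\Gamma(\alpha)$ times the $S_N$-Euler factor. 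This parallels the asymptotic obtained in \cite{Ozman} and \cite{Clark} for quadratic twists, where the analogous exponent appears.

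The main obstacle, and the step I would expect to require the most care, is verifying that the local-solubility conditions for distinct primes $\ell \mid d_2$ are genuinely \emph{independent} and jointly captured by a single Frobenius condition in one Galois extension, so that the density-$\alpha$ prime set is well defined and the Euler product converges as claimed. Concretely, one must check that imposing both $\left(\frac{d_1}{\ell}\right)=1$ and a root of the Hilbert class polynomial mod $\ell$ corresponds to a union of conjugacy classes in $\Gal(L/\Q)$ for $L$ the compositum of $\Q(\sqrt{d_1})$ with the ring class field attached to $-m_2$, and that the remaining (non-density) local conditions truly depend only on $S_N$ and not on $d_2$. Once this Chebotarev bookkeeping is pinned down, the tauberian extraction is standard.
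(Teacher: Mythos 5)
Your overall skeleton does match the paper's: identify $S_N$ as a Chebotarev (Frobenius) set of density $\alpha$ in the compositum of $\Q(\sqrt{d_1})$ with the ring class field, reduce membership in $A'$ to the condition that every prime factor of $d_2$ lies in $S_N$ (together with $d_2 \equiv 1 \bmod 4$ for the prime $2$), and then count squarefree integers all of whose prime factors lie in a density-$\alpha$ set. For that last step the paper simply quotes Serre's theorem (Theorem \ref{thm:serreden}, i.e.\ Theorem 2.8 of \cite{serreden}) rather than rerunning the Selberg--Delange machinery you sketch, but that is the same analytic content and not a real difference. There is, however, a genuine gap in your reduction step, and it sits exactly where the paper's proof does its real work. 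You assert that local solubility at the primes dividing $N$ and at the primes ramified in $\Q(\sqrt{d_1})$ is ``forced independently of $d_2$.'' That is false: for $p=m_1$ or $p=m_2$, Theorem \ref{thm:mainbad} needs the splitting behaviour of $p$ in $\Q(\sqrt{d_2})$, and for $p=d_1$ Proposition \ref{prop:unramifiedgood} needs $d_1$ to split in $\Q(\sqrt{d_2})$ --- both of which obviously vary with $d_2$. The paper closes this by \emph{deriving} the needed splitting from the $S_N$-membership of the prime factors of $d_2$: since the genus field of $\Q(\sqrt{-N})$ is $\Q(\sqrt{-1},\sqrt{m_1},\sqrt{m_2})$ and $j(\sqrt{-N})$ is real, $\Q(\sqrt{m_1},\sqrt{m_2})$ lies in $\Q(j(\sqrt{-N}))$; each prime $q \mid d_2$, being in $S_N$, has a degree-one prime of $\Q(j(\sqrt{-N}))$ above it, whence $\left(\frac{m_i}{q}\right)=\left(\frac{q}{m_i}\right)=1$ and $m_i$ splits in $\Q(\sqrt{d_2})$; similarly $\left(\frac{d_1}{q}\right)=1$ for all $q\mid d_2$ plus quadratic reciprocity (using $d_1\equiv 1 \bmod 4$) forces $d_1$ to split in $\Q(\sqrt{d_2})$. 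Without this class-field-theoretic reciprocity argument, the implication ``all prime factors of $d_2$ lie in $S_N$ $\Rightarrow$ $d_2\in A'$'' fails, and the Serre/Selberg--Delange count of the set $A$ no longer gives the asymptotic for $|A'|$.

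A secondary but related misstatement: in your first paragraph you locate the density-$\alpha$ constraint at the good unramified primes $p\nmid N$, $p>4g^2$, inert in $\Q(\sqrt{d_2})$. Such primes impose no condition at all --- for $p>4g^2$ the Weil bounds plus Hensel's lemma already give a $\Q_p$-point, which is precisely why the paper can restrict attention to small primes. The condition defining $S_N$ actually bites at the primes $p \mid d_2$, i.e.\ the primes \emph{ramified} in $\Q(\sqrt{d_2})$, where Proposition \ref{prop:inertram} demands a degree-one prime of $\Q(H(-m_2))$ over $p$, and where $\left(\frac{d_1}{p}\right)=1$ is needed so that $p$ ramifies in only one of the two quadratic fields (the hypothesis of that proposition). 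Your second paragraph quietly switches to this correct formulation (conditions at $\ell \mid d_2$), but the two paragraphs as written contradict each other, and only the second one supports the Euler-product factorization you then feed into the tauberian argument. Fixing the attribution, and supplying the reciprocity argument above for $p\in\{m_1,m_2,d_1\}$, would bring your proposal in line with the paper's proof.
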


In order to prove this proposition we need the following result of Serre:

\begin{theorem}[Serre, Theorem 2.8 in \cite{serreden}] \label{thm:serreden}
Let $0<\alpha <1$ be  Frobenius density of a set of primes $S$ and $N_S(X)$ is the number of squarefree integers in $[1 \ldots X]$ all of whose prime factors lie in $S$. Then $N_S(X) = c_S\frac{X}{\log^{1-\alpha}X}+ O\left (\frac{X}{\log^{2-\alpha}X} \right )$ for some positive constant $c_S$. 
\end{theorem}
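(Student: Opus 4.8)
The plan is to prove this by the analytic (Landau--Selberg--Delange) method: attach a Dirichlet series to $N_S$, compare it with a fractional power of $\zeta(s)$, and extract the asymptotic by a Tauberian/contour argument. Let $f=\mathbf{1}_A$ be the indicator of the set $A$ of squarefree integers all of whose prime factors lie in $S$. Then $f$ is multiplicative and supported on squarefrees, so
$$F(s):=\sum_{n\ge 1}\frac{f(n)}{n^{s}}=\prod_{p\in S}\bigl(1+p^{-s}\bigr),$$
absolutely convergent for $\Re s>1$, and $N_S(X)=\sum_{n\le X}f(n)$. The guiding heuristic is that, since $S$ has relative density $\alpha$ among the primes, $F(s)$ should behave near $s=1$ like $\zeta(s)^{\alpha}$ times a holomorphic factor; the exponent $\alpha$ is precisely what yields the power $(\log X)^{-(1-\alpha)}$ in the count.

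First I would make the comparison with $\zeta(s)^{\alpha}$ precise, which is where the Frobenian hypothesis enters. Taking logarithms, $\log F(s)=\sum_{p\in S}p^{-s}+O(1)$ and $\alpha\log\zeta(s)=\alpha\sum_p p^{-s}+O(1)$ for $\Re s>\tfrac12$, so
$$\log F(s)-\alpha\log\zeta(s)=\sum_p\bigl(\mathbf{1}_S(p)-\alpha\bigr)p^{-s}+O(1).$$
Now I use that $S$ is Frobenian: there is a finite Galois extension $L/\Q$ and a union of conjugacy classes $C\subseteq\Gal(L/\Q)$, with $\alpha=|C|/|\Gal(L/\Q)|$, such that $p\in S$ iff $\frob_p\in C$ for all but finitely many $p$. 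Decomposing the class function $\mathbf{1}_C=\alpha\cdot\mathbf{1}+\sum_{\rho\ne 1}c_\rho\chi_\rho$ into irreducible characters and using $\sum_p\chi_\rho(\frob_p)p^{-s}=\log L(s,\rho)+O(1)$ for $\Re s>\tfrac12$, I get
$$\sum_p\bigl(\mathbf{1}_S(p)-\alpha\bigr)p^{-s}=\sum_{\rho\ne 1}c_\rho\,\log L(s,\rho)+O(1),$$
a sum over the nontrivial Artin $L$-functions attached to $L/\Q$. These $L(s,\rho)$ are holomorphic and non-vanishing on $\Re s=1$ and in a standard zero-free region to its left (known for the fixed field $L$ via Dedekind zeta functions and Brauer induction), so the right-hand side continues holomorphically there. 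Exponentiating yields a factorization $F(s)=\zeta(s)^{\alpha}H(s)$ with $H$ holomorphic, non-zero, and of moderate growth in a Delange region $\Re s\ge 1-c/\log(|\Im s|+2)$, and $H(1)=\exp\bigl(\lim_{s\to 1^+}(\log F(s)-\alpha\log\zeta(s))\bigr)$ a finite positive real number.

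With this factorization I would invoke the Landau--Selberg--Delange theorem: for $F(s)=\zeta(s)^{\alpha}H(s)$ with $H$ as above and $0<\alpha<1$,
$$N_S(X)=\frac{X}{(\log X)^{1-\alpha}}\left(\frac{H(1)}{\Gamma(\alpha)}+O\!\left(\frac1{\log X}\right)\right),$$
which gives the claim with $c_S=H(1)/\Gamma(\alpha)>0$ and error $O\bigl(X/(\log X)^{2-\alpha}\bigr)$. Concretely one realizes this either by quoting the theorem in this form, or by running the contour argument: apply Perron's formula to $F(s)X^s/s$, deform the contour into a Hankel contour $\mathcal H$ around the branch point at $s=1$ dictated by $\zeta(s)^{\alpha}$, evaluate the leading term through $1/\Gamma(\alpha)=\frac{1}{2\pi i}\int_{\mathcal H}e^{w}w^{-\alpha}\,dw$, and absorb the horizontal and distant pieces into the error using the zero-free region together with the moderate-growth bound on $H$.

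The main obstacle is the continuation step for $H(s)=F(s)\zeta(s)^{-\alpha}$. A bare density statement $\#\{p\le t:p\in S\}\sim\alpha\,\pi(t)$ only controls $\sum_p(\mathbf{1}_S(p)-\alpha)p^{-s}$ for $\Re s>1$ and is not even enough to guarantee convergence on the line $\Re s=1$, let alone holomorphic continuation to its left; it is precisely the Frobenian structure --- through the holomorphy and non-vanishing of the Artin $L$-functions $L(s,\rho)$ near $\Re s=1$ --- that supplies both. Everything downstream (Perron's formula, the Hankel-contour evaluation, and the error bookkeeping) is then standard analytic number theory.
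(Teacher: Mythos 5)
The paper does not prove this statement at all: it is quoted verbatim as Theorem 2.8 of Serre's \emph{Divisibilit\'e de certaines fonctions arithm\'etiques} and used as a black box. Your reconstruction via the Landau--Selberg--Delange method --- the factorization $F(s)=\zeta(s)^{\alpha}H(s)$ obtained from the character decomposition of the Frobenian condition and the holomorphy and non-vanishing of the attached Artin $L$-functions near $\Re s=1$, followed by the Hankel-contour extraction with constant $H(1)/\Gamma(\alpha)$ --- is correct and is essentially Serre's own argument, so it matches the proof behind the cited result rather than offering a different route.
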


\emph{Proof of proposition:} Let $S_N$ be the set of primes $p$ such that Hilbert class polynomial of reduced discriminant $-m_2$ has a root mod $p$ and $\left( \frac{d_1}{p}\right)=1$. Since $S_N$ is a Chebotarev set of primes, it has a well defined density $\alpha$. Let $A=\{d \in \Z | d \leq X,\textrm{squarefree},(d,N)=1, d=\Pi_i p_i, p_i \in S_N\}$ then the density of $A$ is given by the above theorem of Serre. 

Let $p=2$. Since $2$ is ramified in $\Q(\sqrt{-N})$, $2$ must be unramified in $\Q(\sqrt{d_1},\sqrt{d_2})$. Therefore we should consider $d_2$ in $A$ such that $d_2 \equiv 1$ mod $4$. Since $2 \nmid N$, $2$ is an unramified good prime. By Proposition \ref{prop:inertram} if $2$ splits in both $Q(\sqrt{d_i})$ or inert in both, then $\Xc(N)(\Q_2)$ is nonempty. In the case that $2$ is inert in one of the quadratic fields and split in the other one, the conditions $m_i \equiv 1$ mod $4$, $m_1 \equiv m2 \mod 8$ and $\left( \frac{-2m_1}{m_2} \right)=1$ guarantees the existence of a $\Q_2$-point. 

Now we will show that for each $d_2$ in $A$ such that $d_2 \equiv 1$ mod $4$, $\Xc(\Q_p)$ is nonempty. We begin with $p \nmid N$. If $p$ is inert in both $\Q(\sqrt{d_1})$ and $\Q(\sqrt{d_2})$ or splits in both then $\Xc(\Q_p)$ is nonempty by Proposition \ref{prop:inertram}. 

Let $p$ be inert in $\Q(\sqrt{d_1})$ and splits in $\Q(\sqrt{d_2})$. Then by assumption $p$ has to be greater than $4g^2$. Hence using Weil bounds and Hensel's Lemma, $\Xc(\Q_p)$ is nonempty. 

Say $p$ is inert in $\Q(\sqrt{d_2})$ and splits in $\Q(\sqrt{d_1})$. If $p>4g^2$, there are local points, so say $p<4g^2$. If $\left( \frac{-pm_2}{p_i} \right)=1$ for all $p_i | m_1$, there are local points by Part c of Proposition \ref{prop:inertram}. On the other hand if $\left( \frac{-pm_2}{p_i} \right)=-1$ for some $p_i |m_1$, then we are done with last part of the same proposition since $p \in S_N$ and $p$ is an odd prime.

Let $p$ be ramified in $\Q(\sqrt{d_1})$. In particular $p=d_1$. Since $p \in S_N$, $\left( \frac{d_1}{p}\right)=1=\left( \frac{p}{d_1}\right)$, hence $p$ splits in $\Q(\sqrt{d_2})$. Then with the given conditions on $m_1,m_2$ and $d_1$ in the beginning, $\Xc(\Q_p)$ is nonempty by Proposition \ref{prop:unramifiedgood}. Similarly, let $p$ be ramified in $\Q(\sqrt{d_2})$. Then $p|d_2$, $p\in S_N$, hence $\left( \frac{d_1}{p}\right)=1$, $p$ splits in $\Q(\sqrt{d_1})$ hence $\Xc(\Q_p)$ is nonempty by the same proposition.
 
For $p=m_1$ or $p=m_2$ we show that $p$ splits in $\Q(\sqrt{d_1}, \sqrt{d_2})$. By given conditions, $p$ splits in $\Q(\sqrt{d_1})$. The genus field of $\Q(\sqrt{-N})$ is $\Q(\sqrt{-1},\sqrt{m_1},\sqrt{m_2})$ and ring class field of $\Z[\sqrt{-N}]$ is $\Q(\sqrt{-N},j(\sqrt{-N}))$ and $j(\sqrt{-N})$ is real(\cite{Cox}). Therefore $\Q(\sqrt{m_1},\sqrt{m_2})$ lies in $\Q(j(\sqrt{-N}))$. Let $q$ be a prime divisor of of $d$. Note that $q$ has to be odd. Since $q$ is in $S_N$, there is a prime $\mathcal{Q}$ of $\Q(j(\sqrt{-N}))$ lying over $q$ with inertia degree $1$. Therefore, $\left ( \frac{p}{q} \right )=\left ( \frac{q}{p} \right )=1$, hence $p$ splits in $\Q(\sqrt{d_2})$. 

Therefore under the given conditions on $m_i$ and $d_1$, for any $d_2$ in $A$ and $d \equiv 1$ mod $4$, $\Xc(N)(\Q_p) \neq \emptyset$ for all $p$. This gives us the claimed asymptotic.

\vspace{0.5cm}
Using this result, one can write down a curve $X_0(N)$ and compute an explicit asymptotic for the set of bi-quadratic twists of $\Xc$ violating the Hasse principle using Faltings' finiteness results as done by Clark in the proof of Theorem 2 in \cite{Clark}. The following is essentially same as Theorem 2 in \cite{Clark}.

\begin{theorem}\label{fintwist}
Let $D,N$ be a squarefree integers and $m$ be a divisor of greater than $427$. Let $K=\Q(\sqrt{D})$ and $L$ be a quadratic extension of $K$. Consider the cocycle $\zeta':\sigma \mapsto w_m$ where $\sigma$ is the generator of the Galois group of $L/K$. Let $X^D/K$ denote the twist of $\X/K$ via $\zeta'$. Hence $X^D(K)=\{P \in \X(L)| \sigma(P)=w_m(P) \}$. Then there are only finitely many $D$ such that $X^D(K)$ is nonempty. 
\end{theorem}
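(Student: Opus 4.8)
The plan is to follow the strategy Clark uses for Theorem~2 of \cite{Clark}: convert the existence of a point on the twist into the existence of a point of low degree on the Atkin--Lehner quotient curve $Y:=\X/w_m$, and then apply Faltings' theorem together with the finiteness of quadratic points on curves that are neither hyperelliptic nor bielliptic.

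First I would record the descent observation. Let $\pi\colon \X\to Y$ be the quotient map; it is defined over $\Q$. If $P\in X^{D}(K)$, so that $P\in\X(L)$ and $\sigma(P)=w_m(P)$ with $\sigma$ generating $\Gal(L/K)$, then
$$\sigma(\pi(P))=\pi(\sigma(P))=\pi(w_m(P))=\pi(P),$$
since $\pi$ is $\Q$-rational and collapses the $w_m$-orbit. Hence $\pi(P)\in Y(K)$; that is, every point of $X^{D}(K)$ maps to a point of $Y$ of degree at most $2$ over $\Q$ whose field of definition is contained in $K=\Q(\sqrt{D})$.

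Second, I would bound these low-degree points. This is where the hypothesis $m>427$ is used: by the classification of Atkin--Lehner quotients of modular curves of small genus, for $m>427$ the curve $Y$ has genus at least $2$ and is neither hyperelliptic nor bielliptic. Faltings' theorem then makes $Y(\Q)$ finite, and by the theorem of Harris--Silverman the set $\bigcup_{[F:\Q]\le 2}Y(F)$ of all points of degree at most $2$ over $\Q$ is finite as well; call this finite set $\mathcal{Q}$. For a point $P$ as above whose image lies in $\mathcal{Q}$ and is a \emph{genuine} quadratic point, its field of definition $\Q(\pi(P))$ must equal $K=\Q(\sqrt{D})$, and since $\mathcal{Q}$ is finite this leaves only finitely many possibilities for $D$. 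Thus all $D$ for which $X^{D}(K)$ contains a point with quadratic image are accounted for.

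The remaining case, when $\pi(P)\in Y(\Q)$, is the step I expect to be the main obstacle. Here the fibre $\pi^{-1}(\pi(P))=\{P,w_m(P)\}$ is defined either over $\Q$ or over a single quadratic field $\Q(\sqrt{e})$, with $e$ ranging over a finite set (one field for each of the finitely many points of $Y(\Q)$). The twisting relation $\sigma(P)=w_m(P)$ forces $\sigma$ to restrict to the nontrivial automorphism of $\Q(\sqrt{e})$, so such a $P$ is in fact a \emph{fixed} quadratic point on $\X$ -- either a $\Q$-rational $w_m$-fixed point, or a rational point on the $\Q$-quadratic twist of $\X$ by $w_m$ along $\Q(\sqrt{e})$. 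The difficulty is that any single such $P$ lies in $X^{D}(K)$ for infinitely many $D$, since one can realize the required $\sigma$ by enlarging the base field; hence these points must be shown not to occur at all. Since $\X$ is itself non-hyperelliptic and non-bielliptic of genus $\geq 2$ in this range, Harris--Silverman guarantees that $\X$ has only finitely many quadratic points in total, leaving only finitely many candidate $P$. The crux is then to verify, using the arithmetic of the $w_m$-fixed CM locus (the simultaneous embeddings of $\Z[\sqrt{-m}]$ and $\Z[\tfrac{1+\sqrt{-m}}{2}]$ analyzed in Section~3) together with the vanishing of rational points on the relevant finite list of these quadratic twists, that none of these finitely many candidates satisfies $\sigma(P)=w_m(P)$. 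Once this exceptional contribution is excluded, combining it with the finiteness from the quadratic-image case yields that $X^{D}(K)\neq\emptyset$ for only finitely many $D$.
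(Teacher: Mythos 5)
Your first two steps track the paper's proof: the paper likewise composes $\alpha_D: X^D(K)\hookrightarrow \X(L)$ with $\beta_D:\X(L)\to \X/w_m(L)$ and observes that the image $C_D$ lands in $\X/w_m(K)$. But the mechanisms then diverge, and your version misattributes the hypothesis $m>427$: in the paper it is a class-number bound --- $m>427$ is equivalent to $h(\Z[\sqrt{-m}])>2$, i.e.\ no $w_m$-fixed point of $\X$ is defined over a quadratic field --- not a hyperelliptic/bielliptic classification, and the paper never invokes Harris--Silverman. The paper's finiteness argument (following Clark \cite{Clark}) is: the sets $C_D$ are pairwise disjoint, because a point of $C_D\cap C_{D'}$ with $D\neq D'$ lifts to a $w_m$-fixed point of $\X$ rational over a quadratic field, contradicting the class-number bound; since $\X/w_m$ has genus $>1$, Faltings makes the ambient set of points finite, and only finitely many pairwise disjoint nonempty subsets can fit inside it.

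The genuine gap in your proposal is your third step, which is not an argument but an acknowledgment that one is missing: you end by saying it remains ``to verify\dots that none of these finitely many candidates satisfies $\sigma(P)=w_m(P)$.'' That verification cannot succeed in general. A quadratic point $P$ of $\X$ with $P\neq w_m(P)$, whose Galois conjugate is $w_m(P)$, is precisely a rational point on an ordinary quadratic twist of $\X$ by $w_m$, and such points genuinely occur (the paper's final example exhibits rational points on the quadratic twist of $X_0(26)$ by $\Q(\sqrt{-1})$ and $w_{26}$); nothing in $m>427$ excludes them, since the class-number bound only kills the \emph{fixed} case $P=w_m(P)$. You correctly noticed that such a $P$, defined over some $\Q(\sqrt{e})$, would produce points of $X^D(K)$ for infinitely many $D$ once $L$ is allowed to be an arbitrary quadratic extension of $K$ containing $\Q(\sqrt{e})$ --- this is a real fault line, and it is exactly the spot where the paper's disjointness step tacitly assumes, as in Clark's original setting where $L=\Q(\sqrt{D})$, that the quadratic field carrying the lift is pinned to $D$. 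So where the paper disposes of the $\pi(P)\in (\X/w_m)(\Q)$ case by reducing any overlap to a $w_m$-fixed quadratic point and applying $h(\Z[\sqrt{-m}])>2$, your plan of excluding that case outright by checking a finite list of quadratic twists has no proof and, as stated, is false as a general expectation; your write-up therefore does not establish Theorem \ref{fintwist}.
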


\begin{proof}
Consider the following maps:

$$\alpha_D: X^D(N)(K) \hookrightarrow \X(L)$$ and 

$$\beta_D: \X(L) \rightarrow \X/w_m (L)$$

Let $C_D$ be the image of compositions of $\beta_D$ and $\alpha_D$. Then $C_D$ is in $\X/w_m (K)$. Moreover, $\X/w_m (K)=\bigcup_{D} C_D \cup w_m(\X(K))$.

Since $m>427$, no $w_m$-fixed point is defined over a quadratic number field (which is equivalent to say that the class number of the order $\Z[\sqrt{-m}]$ is bigger than $2$). Therefore, $S_D \cap S_{D'}=\emptyset$ for $D\neq D'$ since any element $P$ in the intersection $S_D \cap S_{D'}$ gives rise to a $K$ rational $w_m$-fixed point. Moreover, the curve $X_0(N)/w_m$ has genus bigger than one since $m$ is big enough. Therefore there are only finitely many $D$ such that $S_D$ is non-empty. This implies that there are only finitely many $D$ such that $X^D(K)$ is non-empty.

\end{proof}

\begin{theorem}\label{thm:violation}
Assuming the conditions on $m_i$ and $d_1$ given in the beginning of the section and $N>427$,  the number of the bi-quadratic twists $\Xc$ which violate the Hasse Principle when there is no prime simultaneously ramified in $\Q(\sqrt{d_1}, \sqrt{d_2})$ and $\Q(\sqrt{-N})$, is asymptotically $M_{S_N}\frac{X}{\log^{1-\alpha}X}$. 
\end{theorem}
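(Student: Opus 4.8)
The plan is to separate the two ingredients that constitute a Hasse-principle violation: everywhere-local solubility and the absence of a global point. The everywhere-local count is already supplied by Proposition \ref{prop:densityodd}, which gives $|A'| = M_{S_N}\frac{X}{\log^{1-\alpha}X} + O\left(\frac{X}{\log^{2-\alpha}X}\right)$. It therefore suffices to show that among the $d_2 \in A'$ only finitely many yield a twist possessing a rational point; subtracting a bounded quantity from $|A'|$ leaves the same leading asymptotic.

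First I would reduce the biquadratic situation to a quadratic one over a fixed base field. Since $d_1$ is fixed throughout the family, set $K_1 = \Q(\sqrt{d_1})$, so that $\K = K_1(\sqrt{d_2})$ and $\Gal(\K/K_1) = \langle \sigma_2\rangle$. By definition a rational point $P \in \Xc(\Q)$ is a $\K$-rational point of $\X$ satisfying $P^{\sigma_1} = w_{m_1}(P)$ and $P^{\sigma_2} = w_{m_2}(P)$. Reading only the second relation, $P$ is precisely a $K_1$-rational point of the quadratic twist $X'$ of $\X_{/K_1}$ attached to the extension $\K/K_1$ and the cocycle $\sigma_2 \mapsto w_{m_2}$. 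Hence $\Xc(\Q) \neq \emptyset$ forces $X'(K_1) \neq \emptyset$, and as $d_2$ varies the collection $\{X'\}$ is exactly the one-parameter family of quadratic twists over the \emph{fixed} field $K_1$ governed by the varying quadratic extension $\K/K_1$.

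Next I would invoke the finiteness of Theorem \ref{fintwist} for this family, with base field $K_1$ and the Atkin–Lehner involution $w_{m_2}$ of index exceeding $427$ afforded by the standing hypotheses. The mechanism is Clark's: composing the inclusion $X'(K_1) \hookrightarrow \X(\K)$ with the quotient $\X \to \X/w_{m_2}$ sends each rational point into $(\X/w_{m_2})(K_1)$, because quotienting by $w_{m_2}$ trivialises the twisting cocycle. For distinct values of $d_2$ the resulting images are disjoint: a point common to two of them would be a $w_{m_2}$-fixed, $K_1$-rational — in particular at most quadratic — point of $\X$, and the bound on $m_2$ guarantees that the order $\Z[\sqrt{-m_2}]$ has class number exceeding $2$, so no such point exists. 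Since $\X/w_{m_2}$ has genus at least $2$, Faltings' theorem makes $(\X/w_{m_2})(K_1)$ finite, and the disjointness then bounds the number of admissible $d_2$. Thus only finitely many $d_2$ can give $\Xc(\Q) \neq \emptyset$.

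Combining the two counts yields the theorem: the number of $d_2 \in A'$ for which $\Xc$ violates the Hasse principle equals $|A'|$ minus a finite quantity, which is $M_{S_N}\frac{X}{\log^{1-\alpha}X} + O\left(\frac{X}{\log^{2-\alpha}X}\right)$ and hence asymptotic to $M_{S_N}\frac{X}{\log^{1-\alpha}X}$. The principal obstacle is the finiteness step: one must check that the passage from the biquadratic twist to the quadratic twist over $K_1$ is faithful, that the images in $(\X/w_{m_2})(K_1)$ are genuinely disjoint — this is where the hypothesis forcing the relevant Atkin–Lehner index past $427$, and thus the class number past $2$, is essential — and that the quotient has genus at least $2$ so that Faltings applies. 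The remaining bookkeeping, including the essentially injective dependence of $\K$ on the squarefree parameter $d_2$, is routine.
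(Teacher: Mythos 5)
Your overall architecture matches the paper's (take the everywhere-local count from Proposition \ref{prop:densityodd}, show only finitely many $d_2$ give a twist with a rational point, subtract), but your finiteness step has a genuine gap. You discard the relation $P^{\sigma_1}=w_{m_1}(P)$ and keep only $P^{\sigma_2}=w_{m_2}(P)$, reducing to quadratic twists by $w_{m_2}$ over the fixed field $K_1=\Q(\sqrt{d_1})$. For the Clark-style argument to work there you yourself identify the two requirements: the class number of $\Z[\sqrt{-m_2}]$ must exceed $2$ (for disjointness of the images in $(\X/w_{m_2})(K_1)$) and $\X/w_{m_2}$ must have genus at least $2$. You then assert these are ``afforded by the standing hypotheses,'' but they are not: the hypothesis is $N=m_1m_2>427$ with $m_1,m_2$ primes congruent to $1$ bmod $4$, so $m_2$ can be as small as $5$ (e.g.\ $m_1=101$, $m_2=5$), in which case $\Z[\sqrt{-m_2}]$ has class number $2$ and your disjointness argument collapses: a point common to two of the sets $C_{d_2}$ would yield a $w_{m_2}$-fixed point over a quadratic field, which is no longer excluded. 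The bound $427$ in Theorem \ref{fintwist} is a hypothesis on the Atkin--Lehner index $m$ of the twisting involution, and your reduction only delivers $m=m_2$, for which no lower bound is available.

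The paper's proof avoids this precisely by \emph{composing} the two relations rather than forgetting one: any $P\in\Xc(\Q)$ satisfies $P^{\sigma_1\sigma_2}=w_{m_1}w_{m_2}(P)=w_N(P)$, so $P$ induces a $K$-rational point of the quadratic twist of $\X_{/K}$ by the cocycle $\sigma_1\sigma_2\mapsto w_N$, where $K=\Q(\sqrt{d_1d_2})$ is the fixed field of $\sigma_1\sigma_2$. Theorem \ref{fintwist} is then applied with $m=N>427$, which is exactly what the stated hypothesis provides, and finiteness in $D=d_1d_2$ translates into finiteness in $d_2$ because $d_1$ is fixed (each $D$ admits finitely many factorizations). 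Note that in this route the base field $K$ varies with $d_2$; the paper controls this by first passing to the full quotient $X^*(N)=X_0(N)/\langle w_{m_1},w_{m_2}\rangle$, where the image of $P$ is invariant under all of $\Gal(\K/\Q)$ and hence lands in the single finite set $X^*(N)(\Q)$ (finite by Faltings, since Gonz\'alez--Lario give genus $>1$ for $N>159$). So the $\sigma_1$-relation you threw away is not redundant bookkeeping: it is the ingredient that upgrades the relevant involution from $w_{m_2}$ to $w_N$ and makes the $N>427$ hypothesis bite. Your argument would be correct under the stronger hypothesis $m_2>427$, but it does not prove the theorem as stated.
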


\begin{proof} Let $X^*(N)$ be the quotient of $X_0(N)$ by $<w_{m_1}, w_{m_2}>$. Gonzalez and Lario prove in \cite{GonzalezLario} that the genus of $X^*(N)$ is greater than $1$ when $N>159$ if $N$ is product of 2 primes $m_1$ and $m_2$. Therefore $X^*(N)$ has finitely many $K$-rational points for any number field $K$ when $N>159$. 

Let $\Xc$ be the bi-quadratic twist as defined above. In particular any element $P$ of $\Xc(\Q)$ is fixed by $\sigma_1\sigma_2w_N$. Consider $\X$ as a curve over $K=\Q(\sqrt{d_1d_2})$. Then any $P \in \Xc$ induces a $K$-rational point on the quadratic twist of $\X$ by $\zeta':\sigma_1\sigma_2 \mapsto w_N$. Let $D$ be $d_1d_2$ and $X^D(N)$ denote the quadratic twist of $\X_{/ K}$ by $\zeta'$. By Proposition \ref{fintwist}, there are only finitely many $D$ such that $X^D(K)$ is non-empty. This implies that there are only finitely many twists $\Xc$ with $\Q$-rational points since for each $D$, there are only finitely many relatively prime many tuples $(d_1,d_2)$ such that $d_1d_2=D$. Hence we have the claimed asymptotic for the number of bi-quadratic twists which has local points everywhere but no global points.
\end{proof}

Remark: It is possible to obtain generalization of Theorem \ref{thm:violation} using polyquadratic twists. In order to do this, one first needs to generalize Proposition \ref{prop:densityodd}. This is possible but we preferred to include only the biquadratic twist case for simplicity. Similarly, it is possible to generalize Theorem \ref{fintwist} by replacing $K$ with the compositum field, $\langle \Q(\sqrt{d_id_j})\rangle_ {1\leq i<j\leq k}$ and $L:=\Q(\sqrt{d_1}, \ldots, \sqrt{d_k})$. In order to make the same proof work, we need to make $m$ big enough so that the class number of $\Z[\sqrt{-m}]$ is bigger than the degree of $K$. This is possible since we know that given any $d$ there exists finitely many $\Z[\sqrt{-m}]$ with class number $d$ by the work of Deuring, Mordell and Heilbronn in 1934.

\section{Examples and Other Directions}

In this section we study the existence of global points mostly through examples. We deal with twists which has local points everywhere but no global points. At the end, we give an example of a twist which doesn't have any `'non-exceptional' rational points.

 
 Let $C$ be a smooth, projective, geometrically integral curve over $\Q$ of genus greater than or equal to $2$ with a rational degree one divisor $D$. Then we can embed $C$ into its Jacobian $J$ via the map $P\mapsto [P]-D$.
 
 Let $S$ be a finite set of primes which $C$ has good reduction at and assume that we know the generators of Mordell Weil group, $J(\Q)$. Then for every $p$ in $S$ we can compute the finite abelian group $J(\F_p)$ and the set $C(\F_p)$. Let $inj_p$ denote the injection from $C(\F_p)$ to $J(\F_p)$ and $red_p$ be the reduction map from $J(\Q)$ to $J(\F_p)$. Then we obtain the following diagram: 

$$\begin{array}{ccc}
C(\Q) & \stackrel{P \mapsto [P]-D}{\longrightarrow} & J(\Q) \\
\downarrow & & \downarrow_{red=\prod_{p \in S}red_p} \\
\prod_{p \in S} C(\F_p) & \stackrel{inj=\prod_{p \in S}inj_p}{\longrightarrow} & \prod_{p \in S} J(\F_p)\\
	
\end{array}$$

If there is a $P$ in $C(\Q)$ then $red_p([P]-D)$ is in $inj_p(C(\F_p))$ for any $p$ in $S$. In particular if images of $red$ and $inj$ do not intersect then $C(\Q)=\emptyset$.

\begin{theorem}\label{thm:sch} \emph{(\cite{sch}, \cite{sch2})} In the case of curves and under the assumption that the Tate-Shafarevich group of the jacobian of the curve is finite and the curve has a rational degree one divisor Mordell-Weil Sieve is equivalent to the Brauer-Manin obstruction.
\end{theorem}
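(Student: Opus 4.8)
The plan is to show that both obstructions test emptiness of the \emph{same} subset of $C(\mathbb{A}_{\Q})$, namely the preimage under the embedding $\iota\colon C\hookrightarrow J$ of the closure $\overline{J(\Q)}$ of $J(\Q)$ inside the adelic group $J(\mathbb{A}_{\Q})$. The rational degree one divisor is what lets one descend from $C$ to $J$, while finiteness of the Tate-Shafarevich group $\mathrm{Sha}(J)$ is what makes $\overline{J(\Q)}$ computable. First I would reduce the Brauer-Manin obstruction to its algebraic part: since $C$ is a smooth projective curve, the geometric Brauer group $\mathrm{Br}(\bar C)$ vanishes by Tsen's theorem (the function field of $\bar C$ is $C_1$), so $\mathrm{Br}(C)=\mathrm{Br}_1(C)$ and, over a number field, $\mathrm{Br}_1(C)/\mathrm{Br}(\Q)\cong H^1(\Q,\Pic(\bar C))$. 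The degree one divisor splits the degree sequence $0\to\Pic^0(\bar C)\to\Pic(\bar C)\to\Z\to 0$ Galois-equivariantly, and as $H^1(\Q,\Z)=0$ this yields $H^1(\Q,\Pic(\bar C))\cong H^1(\Q,J)$ via $\Pic^0(\bar C)=J(\bar\Q)$; the same divisor supplies the embedding $\iota$ over $\Q$.

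Next I would match the Brauer-Manin pairing with the arithmetic duality pairing of $J$. Using the principal polarization $J\cong J^\vee$, a class of $H^1(\Q,J)$ that is everywhere locally trivial, i.e. an element of $\mathrm{Sha}(J)$, pulls back along $\iota$ to a Brauer class on $C$, and one checks that for an adelic point $(P_v)_v$ the sum of local invariants $\sum_v\langle A,P_v\rangle_v$ equals the sum of local Tate pairings evaluated on $(\iota(P_v))_v\in J(\mathbb{A}_{\Q})$ against the corresponding class. Consequently $(P_v)_v\in C(\mathbb{A}_{\Q})^{\mathrm{Br}}$ exactly when $(\iota(P_v))_v$ lies in the left kernel of the Cassels-Tate pairing $J(\mathbb{A}_{\Q})\times\mathrm{Sha}(J)\to\Q/\Z$.

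Now I would invoke Poitou-Tate/Cassels-Tate duality, where finiteness of $\mathrm{Sha}(J)$ is essential: it guarantees that this left kernel is precisely the closure $\overline{J(\Q)}$ (in the adelic group with archimedean factors replaced by their groups of connected components). Combining with the previous step gives $C(\mathbb{A}_{\Q})^{\mathrm{Br}}=\iota^{-1}\bigl(\overline{J(\Q)}\bigr)$. Finally I would identify the Mordell-Weil sieve of the diagram above as the system of finite-level approximations to this set: running over finite sets $S$ of good primes and finite quotients such as $\prod_{p\in S}J(\F_p)$, the sieve asks whether the image of $\mathrm{red}(J(\Q))$ meets $\prod_{p\in S}\mathrm{inj}_p(C(\F_p))$, and in the profinite limit this intersection is nonempty if and only if $\iota^{-1}(\overline{J(\Q)})\neq\emptyset$. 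Thus the sieve obstructs $C(\Q)$ exactly when $C(\mathbb{A}_{\Q})^{\mathrm{Br}}=\emptyset$, which is the claimed equivalence.

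The hard part will be the third step: formulating Cassels-Tate/Poitou-Tate duality with the correct adelic object and topology --- the modification at the archimedean places, the compactness of each $J(\Q_v)$, and the passage between $J(\mathbb{A}_{\Q})$ and the profinite completion that the reduction maps $\mathrm{red}_p$ actually see --- together with verifying in the second step that the local invariant maps of the Brauer-Manin pairing coincide, under $\iota$ and the polarization, with the local components of the duality pairing. Tracking the self-duality of $J$ and the functoriality of the pairing under $\iota$ is the delicate point; the reduction to algebraic classes via Tsen and the splitting from the degree one divisor are formal by comparison.
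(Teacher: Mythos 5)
The paper does not actually prove this statement: Theorem \ref{thm:sch} is imported with citations from Scharaschkin's thesis (\cite{sch}, \cite{sch2}), so there is no internal proof to compare against. Measured against Scharaschkin's published argument, your sketch reproduces the correct overall architecture --- Tsen's theorem to reduce to $\textrm{Br}_1$, the rational degree one divisor class splitting $0\to\Pic^0(\bar C)\to\Pic(\bar C)\to\Z\to 0$ to get $H^1(\Q,\Pic(\bar C))\cong H^1(\Q,J)$ and the embedding $\iota$, arithmetic duality for $J$, then finite-level approximation --- but two of your steps have genuine gaps.

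First, the duality step is garbled in a way that matters. For $\alpha\in\textrm{Sha}(J)$ every localization $\alpha_v$ vanishes, so the quantity you propose to compute, the sum of local Tate pairings $\sum_v\langle\iota(P_v),\alpha_v\rangle_v$, is identically zero: pairing adelic points against $\textrm{Sha}(J)$ cannot cut out $\overline{J(\Q)}$. (Indeed, by Manin's computation a locally trivial class contributes only a constant, namely its Cassels--Tate pairing with the class of $\Pic^1$, which your degree one divisor class makes zero --- so $\textrm{Sha}$ classes give no obstruction here at all. Also, ``Cassels--Tate pairing'' properly refers to the pairing $\textrm{Sha}\times\textrm{Sha}\to\Q/\Z$, not to an adelic pairing.) The input you actually need is the pairing of $J(\mathbb{A}_{\Q})_\bullet$ against \emph{all} of $H^1(\Q,J)$ (Milne, \emph{Arithmetic Duality Theorems}, I.6.13), and finiteness of $\textrm{Sha}(J)$ enters precisely to guarantee that the left kernel of \emph{that} pairing equals $\overline{J(\Q)}$; without it the divisible part enlarges the kernel. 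Second, your closing equivalence overreaches: the sieve quotients $\prod_{p\in S}J(\F_p)$ only see a coarse quotient of $J(\mathbb{A}_{\Q})_\bullet$ --- the subgroups $\prod_p\ker\bigl(J(\Z_p)\to J(\F_p)\bigr)$ are not a neighborhood basis of the identity, since they carry no information at bad or archimedean places and none below level one at good $p$. Compactness therefore gives you only one direction (if the sieve obstructs, the adelic intersection $\iota^{-1}(\overline{J(\Q)})$ is empty and Brauer--Manin obstructs); the converse, passing from emptiness of the adelic intersection to a contradiction visible in good-reduction residue data alone, is exactly the delicate content of Scharaschkin's formulation and is not supplied by ``the profinite limit'' as you assert. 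As written, your proof establishes the adelic form of the theorem modulo gap one, not the stated equivalence with the mod-$p$ sieve.
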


If we suspect that $C(\Q)$ is empty then we can try to show the non-existence of global points using Mordell-Weil Sieve which is equivalent to Brauer-Manin obstruction by Theorem \ref{thm:sch}. This explicit method can be very hard to apply in practice since one needs to know equation of $C$ and generators of the Mordell Weil group of jacobian of $C$. For our family of twists, finding equations can be very tricky since we are dealing with modular curves which tend to have equations with big coefficients. Finding generators $J(\Q)$, even when the coefficients of the equation of the curve is small can be very hard as shown in \cite{Flynn} and \cite{BruinStoll}. Moreover, in order to start applying this method, we need to make sure that $\Pic^1(C)(\Q)\neq \emptyset$ which again is not easy in general even for a single curve given with an explicit equation. Moreover, we need to apply this to a family of curves most of which don't have explicit equation(at least with small coefficients). Therefore, we start with the following study about $\Pic^1(C)(\Q)$. Then we will give some explicit examples as well.


Since $C$ has local points everywhere, finding a rational degree one divisor class in enough by the following proposition:

\begin{proposition} \label{brauer}(\cite{CoMa}, Proposition 2.4, Corollary 2.5) 
If $C$ has a $\Q_p$ point then every $\Q_p$ rational divisor class contains a $\Q_p$ rational divisor. If every
$\Q_p$ rational divisor class contains a $\Q_p$ rational divisor for all primes $p$ then
every $\Q$-rational divisor class contains a $\Q$-rational divisor.

\end{proposition}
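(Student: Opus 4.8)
\emph{Proof proposal.} The plan is to route both assertions through the fundamental exact sequence attached to the Hochschild--Serre spectral sequence $H^p(G_k,H^q(C_{\bar k},\mathbb{G}_m)) \Rightarrow H^{p+q}(C,\mathbb{G}_m)$ for a smooth projective geometrically integral curve $C$ over a field $k$. Identifying $H^1(-,\mathbb{G}_m)=\Pic$ and $H^2(-,\mathbb{G}_m)=\operatorname{Br}$, and using Hilbert's Theorem 90 to kill $H^1(G_k,\bar k^{*})$, the low-degree terms produce the exact sequence
$$0 \to \Pic(C) \xrightarrow{\phi} (\Pic C_{\bar k})^{G_k} \xrightarrow{\delta_k} \operatorname{Br}(k) \xrightarrow{\psi} \ker\bigl(\operatorname{Br}(C)\to \operatorname{Br}(C_{\bar k})\bigr).$$
Here $(\Pic C_{\bar k})^{G_k}$ is precisely the group of $k$-rational divisor classes, the image of $\phi$ is the subgroup of those classes that contain an honest $k$-rational divisor, and $\delta_k$ is the obstruction map. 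Thus the proposition is exactly the assertion that $\delta_k$ vanishes, first over each $\Q_p$ and then over $\Q$.

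For the first statement I would take a point $P\in C(k)$ (with $k=\Q_p$) and use it to split the structure map $\pi^{*}\colon \operatorname{Br}(k)\to\operatorname{Br}(C)$: since $\pi\circ P=\mathrm{id}_{\Spec k}$, the evaluation $P^{*}$ is a left inverse of $\pi^{*}$, so $\pi^{*}$ is injective. As the image of $\operatorname{Br}(k)$ lands in $\ker(\operatorname{Br}(C)\to\operatorname{Br}(C_{\bar k}))$, the map $\psi$ is injective; by exactness at $\operatorname{Br}(k)$ this forces $\operatorname{im}\delta_k=0$, i.e. $\delta_k=0$. Hence $\phi$ is an isomorphism and every $k$-rational divisor class contains a $k$-rational divisor.

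For the second statement I would exploit the functoriality of the above sequence under the base changes $\Q\hookrightarrow\Q_v$. Given a $\Q$-rational divisor class $[D]\in(\Pic C_{\bar\Q})^{G_\Q}$, compatibility of $\delta$ with field extension gives that the restriction of $\delta_\Q([D])\in\operatorname{Br}(\Q)$ to $\operatorname{Br}(\Q_v)$ equals $\delta_{\Q_v}([D_v])$, where $[D_v]$ is the induced local class. By hypothesis each local obstruction vanishes, so $\delta_\Q([D])$ dies in every $\operatorname{Br}(\Q_v)$. The Hasse principle for Brauer groups---the injectivity of $\operatorname{Br}(\Q)\hookrightarrow\bigoplus_v\operatorname{Br}(\Q_v)$ coming from the fundamental exact sequence of global class field theory---then yields $\delta_\Q([D])=0$, so $[D]$ contains a $\Q$-rational divisor.

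The step I expect to need the most care is the passage across all places. The injectivity of $\operatorname{Br}(\Q)\hookrightarrow\bigoplus_v\operatorname{Br}(\Q_v)$ requires the archimedean place as well, whereas the hypothesis is stated only over the finite primes; this is harmless in our setting because $C$ has points at every place (in particular $C(\R)\neq\emptyset$), so the first statement already supplies $\delta_\R=0$ and no class escapes. The remaining technical points---the identification of $\operatorname{im}\phi$ with divisor classes admitting a $k$-rational representative (via Hilbert 90) and the compatibility of $\delta$ with base change---are routine but must be pinned down to make the local-to-global deduction legitimate.
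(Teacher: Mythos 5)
Your proof is correct and is essentially the argument of the paper's cited source: the paper itself offers no proof, quoting the result from Coray--Manoil, whose Proposition 2.4 is exactly your Hochschild--Serre sequence $0 \to \Pic(C) \to (\Pic C_{\bar k})^{G_k} \to \operatorname{Br}(k) \to \operatorname{Br}(C)$ with the obstruction killed by the retraction $P^{*}$ of $\pi^{*}$ at a rational point, and whose Corollary 2.5 is your local--global step via the injectivity of $\operatorname{Br}(\Q) \to \bigoplus_v \operatorname{Br}(\Q_v)$. One refinement: your appeal to $C(\R)\neq\emptyset$ to handle the archimedean place is unnecessary (and not granted by the statement, whose hypothesis runs only over finite primes) --- since the reciprocity relation $\sum_v \operatorname{inv}_v = 0$ from the fundamental exact sequence forces $\operatorname{inv}_{\R}\bigl(\delta_{\Q}([D])\bigr)=0$ once all finite invariants vanish, the class $\delta_{\Q}([D])$ dies in $\operatorname{Br}(\Q)$ with no hypothesis at the real place.
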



The involutions $w_m$ of $\X$ also acts on the divisor classes. We will denote by $w_m^0$ the action on $\Pic^0$ and by $w_m^1$ the action on $\Pic^1$. By definition the group of rational degree one divisor classes on $\Xc$, $$\Pic^1(\Xc)(\Q)=\{D \in \Pic^1(\Xc)(\bar{\Q}) | D^{\sigma_i}=w_{m_i}^1(D) \; \forall i\}.$$

Let $S$ be the class of $\infty-0$ in $\Pic^0(X_0(N))(\Q)$. Note that $w_N$ interchanges $0$ and $\infty$.

\begin{lemma} \label{lem:triv} 

Let $\Xc$ be a quadratic twist of $\X$ by $w_N$. There is a $D$ in $\Pic^1(\Xc)(\Q)$ if and only if there is a $P$ in $\Pic^0(X_0(N))(\K)$ such that $P^{\sigma}=w_N^0P+S$.
\end{lemma}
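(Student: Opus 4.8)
We want to characterize when a rational degree-one divisor class exists on the quadratic twist $\Xc$ of $\X$ by $w_N$, where the twist is via the cocycle sending the nontrivial Galois element $\sigma$ to $w_N$. The plan is to unwind the defining condition for $\Pic^1(\Xc)(\Q)$ and translate it into a condition purely inside $\Pic^0(\X)(\K)$, using the fixed reference class $S = [\infty - 0]$.

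**Main approach.** First I would recall that for a quadratic twist, $\Pic^1(\Xc)(\Q) = \{D \in \Pic^1(\bar{\Q}) \mid D^{\sigma} = w_N^1(D)\}$ by the definition given just before the lemma statement. The key structural observation is that $\Pic^1$ is a torsor under $\Pic^0$, and I want to trivialize this torsor over $\K$ by choosing a base class. I would use the rational class $S = [\infty - 0] \in \Pic^0(\X)(\Q)$, noting (as the excerpt emphasizes) that $w_N$ interchanges $0$ and $\infty$, so $w_N^0(S) = [0 - \infty] = -S$. The natural base point for $\Pic^1$ should be something like $[\infty]$ or a symmetric combination; I would set up a bijection $D \mapsto P := D - [\infty]$ (or an analogous shift) carrying $\Pic^1$ to $\Pic^0$, and then compute how the twisting condition $D^\sigma = w_N^1(D)$ transforms under this shift.

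**The computation to carry out.** Substituting $D = P + [\infty]$ into $D^\sigma = w_N^1(D)$, I would get $P^\sigma + [\infty]^\sigma = w_N^0(P) + w_N^1([\infty])$. Since $[\infty]$ is $\Q$-rational, $[\infty]^\sigma = [\infty]$, and since $w_N$ swaps $0$ and $\infty$, we have $w_N([\infty]) = [0]$ so $w_N^1([\infty]) = [0]$. The condition becomes $P^\sigma + [\infty] = w_N^0(P) + [0]$, i.e. $P^\sigma = w_N^0(P) + ([0] - [\infty]) = w_N^0(P) - S$. I would need to reconcile the sign with the statement's $P^\sigma = w_N^0 P + S$, which amounts to choosing the base class as $[0]$ rather than $[\infty]$ (or equivalently replacing $P$ by $-P$ and using $w_N^0(-P) = -w_N^0(P)$); either way the affine-linear change of variable is routine and the existence of one such $P$ is equivalent to the existence of a valid $D$. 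The two directions of the ``if and only if'' follow immediately from the invertibility of the shift map.

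**Anticipated obstacle.** The only subtlety I expect is bookkeeping with signs and the precise choice of base point so that the shifted condition matches the stated form $P^\sigma = w_N^0 P + S$ exactly; one must verify that $[\infty]$ (or $[0]$) is genuinely Galois-fixed and track how $w_N^1$ versus $w_N^0$ interact under the shift, since $w_N^1$ is only an affine (not linear) map on $\Pic^1$. This is entirely mechanical once the torsor structure is made explicit, and no deeper input is needed beyond the $\Q$-rationality of $S$ and the fact that $w_N$ interchanges the cusps $0$ and $\infty$.
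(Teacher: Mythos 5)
Your proposal is correct and takes essentially the same approach as the paper: the paper likewise trivializes the $\Pic^1$-torsor by shifting along the rational cusp, setting $D = P + (0)$ (resp.\ $P = D - (0)$) so that the twisting condition $D^\sigma = w_N^1 D$ becomes exactly $P^\sigma = w_N^0 P + S$ with $S = (\infty - 0)$. Your sign bookkeeping is also right: the base class $[0]$ (rather than $[\infty]$) is the choice that yields the stated form, and this is precisely the choice made in the paper's proof.
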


\begin{proof} Say there is $P \in \Pic^0(X_0(N))(\K)$ such that $P^{\sigma}=w_N^0P+S$. Let $D=P+(0)$, hence $D \in \Pic^1(X_0(N))(\K)$. Now $D^{\sigma}=P^{\sigma}+(0)^{\sigma}=P^{\sigma}+(0)$ since $(0) \in \Pic^1(X_0(N))(\Q)$ and $w_N^1D=w_N^0P+(\infty)$ hence $D^{\sigma}-w_N^1D=P^{\sigma}+(0)-w_N^0P-(\infty)=0$.

Conversely say there is $D \in \Pic^1(X_0(N))(\K)$ such that $D^{\sigma}-w_N^1D=0$. Let $P=D-(0)$. Then $P^{\sigma}=D^{\sigma}-(0)$ and $w_N^0P=w_N^1D-(\infty)$, $P^{\sigma}-w_N^0P=D^{\sigma}-w_N^1D+(\infty)-(0)=(\infty-0)$.
\end{proof}

Remark: We can extend the argument above to higher degree extensions. For instance consider the following biquadratic extension of $X_0(m_1m_2)$ by $K=\Q(\sqrt{d_1}, \sqrt{d_2})$. Let $C_1,C_2,C_3,C_4$ be the four cusps of $X_0(m_1m_2)$ such that $w_{m_1}(C_1)=C_2, w_{m_2}(C_1)=C_3$. Let $C$ denote the twisted curve, $S_1=(C_2-C_1)$ and $S_2=(C_3-C_1)$. Then $\Pic(C)(\Q)\neq \emptyset$ if and only if there exists a $P \in \Pic^0(K)$ such that $\sigma_1 P=w_{m_1}P+S_1$ and $\sigma_2 P=w_{m_2}P+S_2$. The differences of cusps are chosen so that each $w_{m_i}$ acts as $-1$ on $S_i$. For instance, let $m_1=13, m_2=2$. The cuspidal group of $X_0(26)$ is cyclic of order $21$ and the Atkin Lehner involutions act as multiplication by $8$ and $13$ on the cuspidal group. Hence $S_1=15S$ and $S_2=7S$ where $S$ is the generator of the cuspidal group. Then $P=11S$ satisfies the relations, hence any biquadratic twist of $X_0(26)$ has a degree one rational divisor class.

Manin has showed that the difference of cusps of $X_0(N)$ have finite order. Therefore we get the following corollary:

\begin{corollary}\label{cor:div} Let $\Xc$ be quadratic twist with local points everywhere. If the order of the divisor $S=(\infty-0)$ is odd in the cuspidal subgroup of $J_0(N)$ then there is a rational degree one divisor on $\Xd$.
\end{corollary}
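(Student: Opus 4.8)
The plan is to combine Lemma \ref{lem:triv} with Proposition \ref{brauer}, and to locate the required class $P$ entirely inside the cyclic subgroup generated by $S$ itself. By Lemma \ref{lem:triv}, to produce a degree-one class in $\Pic^1(\Xc)(\Q)$ it suffices to exhibit some $P \in \Pic^0(X_0(N))(\K)$ with $P^{\sigma} = w_N^0 P + S$, where $\sigma$ generates $\Gal(\K/\Q)$. Once such a $P$ is in hand, I would invoke the everywhere-local-solubility hypothesis: since $\Xc$ has local points at every prime, Proposition \ref{brauer} guarantees that this rational divisor class actually contains a rational degree-one divisor, which is the conclusion.

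First I would record how $w_N^0$ acts on $S$. Because $w_N$ interchanges the cusps $0$ and $\infty$, we have $w_N^0 S = w_N^0((\infty) - (0)) = (0) - (\infty) = -S$, so $w_N^0$ acts as $-1$ on the class $S$. Next, since $S \in \Pic^0(X_0(N))(\Q)$ it is fixed by $\sigma$, and hence so is every integer multiple: for $P = aS$ one has $P^{\sigma} = a\,S^{\sigma} = aS = P$. Substituting $P = aS$ into the relation $P^{\sigma} = w_N^0 P + S$ then collapses it to $aS = -aS + S$, i.e. $(2a - 1)S = 0$ in the cuspidal subgroup.

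It remains to solve $(2a-1)S = 0$. By Manin's theorem $S$ has finite order $n$, and the hypothesis of the corollary is exactly that $n$ is odd. Then $2$ is a unit modulo $n$, so the congruence $2a \equiv 1 \pmod{n}$ is solvable; concretely $a = (n+1)/2$ is an integer and yields $2a - 1 = n \equiv 0 \pmod{n}$, so $(2a-1)S = 0$. This produces the desired $P = aS \in \Pic^0(X_0(N))(\Q) \subseteq \Pic^0(X_0(N))(\K)$, completing the construction via Lemma \ref{lem:triv} and Proposition \ref{brauer}.

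I do not anticipate a serious obstacle: the entire content is the observation that, restricted to the cyclic group $\langle S\rangle$ on which $w_N^0$ acts by $-1$, the twisting relation reduces to inverting $2$, which is possible precisely when the order of $S$ is odd. The only points warranting care are verifying that $w_N$ genuinely swaps $0$ and $\infty$ (giving $w_N^0 S = -S$) and that the $\Q$-rationality of $S$ makes all of its multiples $\sigma$-fixed; both are immediate. I would also note that the parity hypothesis is genuinely used and cannot be dropped by this argument, since if $n$ is even then $2a-1$ is always a unit modulo $n$ and $(2a-1)S = 0$ forces $S = 0$.
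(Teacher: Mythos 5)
Your proposal is correct and is essentially the paper's own proof: the paper likewise combines Lemma \ref{lem:triv} with Proposition \ref{brauer}, using that $w_N^0$ acts as $-1$ on the subgroup generated by $S$ so that the odd order of $S$ allows one to solve $2P=S$ there, and your version merely makes the explicit choice $P=\frac{n+1}{2}S$ and verifies $(2a-1)S=0$. (One small slip in your closing aside: for even $n$ the odd number $2a-1$ need not be a unit modulo $n$ — e.g.\ $3 \bmod 6$ — though your conclusion stands, since $(2a-1)S=0$ would force the even order $n$ to divide the odd integer $2a-1$, which is impossible.)
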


\begin{proof} By Lemma \ref{lem:triv} there is a rational degree one divisor class on $\Xc$ if and only if there is a $P$ in $\Pic^0(X_0(N))(\K)$ such that $P^{\sigma}=w_N^0P+S$. Since $w_N$ acts as $-1$ on $J_0(N)$ and the order of $S$ is odd, there is such $P$. Then by Proposition \ref{brauer} this is equivalent to have a degree one rational divisor on $\Xc$.
\end{proof}

Since the cuspidal subgroup of $J_0(26)$ is cyclic of order $21$, by Corollary \ref{cor:div} every quadratic twist $C$ has a rational degree one divisor if $C$ has local points everywhere. Note that since the equation of $X_0(26)$ is an irreducible sextic, it is not possible to see the existence of a degee one divisor class via elementary methods.

When $N$ is prime, Ogg showed  in \cite{OggC} that the order of $S$ is given by numerator of $ \frac{N-1}{12}$. Hence we can conclude that whenever $N$ is prime and not congruent to $13$ modulo $24$, $\Pic(C)(\Q)$ is nonempty under the assumption that $C(\Q_p)$ is nonempty.

Hence this shows that the work is in the case when the order of $S$ is even. In this case we have the following result.

\begin{theorem}\label{thm:divinert} Let $N>3$ be a prime that is inert in the quadratic number field $\K$. Let $X$ be quadratic twist of $X_0(N)$ by $K$ and $w_N$. Then $\Pic^1(X)(\Q_N)$ is empty if and only if $N \equiv 1 \mod 24$ or $N \equiv 17 \mod 24$.
\end{theorem}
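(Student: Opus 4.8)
The plan is to reduce the statement to a single congruence in the component group of $J_0(N)$ at the prime $N$. First I would record the $\Q_N$-analogue of Lemma~\ref{lem:triv}. Since $N$ is inert in the quadratic field $\K$, the completion $\K_\nu$ is the unramified quadratic extension $\Q_{N^2}$ of $\Q_N$, and $\Gal(\K_\nu/\Q_N)$ is cyclic of order two, generated by the Frobenius $\sigma$. The proof of Lemma~\ref{lem:triv} is purely formal: it manipulates divisor classes and uses only that the cusps $0,\infty$ are rational and that the relevant Galois group has order two. It therefore carries over verbatim to give that $\Pic^1(X)(\Q_N)\neq\emptyset$ if and only if there exists $P\in J_0(N)(\K_\nu)$ with $P^{\sigma}=w_N^0P+S$, where $S$ is the class of $(\infty)-(0)$.

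Next I would reduce this equation modulo $N$ to the component group. Let $\Phi$ be the group of connected components of the special fibre of the N\'eron model of $J_0(N)$ over $\Z_N$. By the Deligne--Rapoport model \cite{Deligne} and Mazur's identification of the component group, $\Phi$ is cyclic of order $n$, where $n$ is the numerator of $(N-1)/12$, and $\Phi$ is generated by the specialization $\bar S$ of $S$; this $n$ is exactly the order of $S$ computed by Ogg \cite{OggC}. Two facts about the induced actions on $\Phi$ are crucial. Because $w_N$ is defined over $\Q$ it extends to the N\'eron model by the N\'eron mapping property and acts on $\Phi$; since $w_N(S)=(0)-(\infty)=-S$ and $\bar S$ generates $\Phi$, this action is $-1$. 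Because $S\in J_0(N)(\Q)$, its specialization $\bar S$ is Frobenius-fixed, so $\bar S$ being a generator forces Frobenius to act trivially on $\Phi$. Applying the reduction homomorphism $J_0(N)(\K_\nu)\to\Phi$, which is compatible with $\sigma$ and with the endomorphism $w_N^0$, to the equation $P^{\sigma}=w_N^0P+S$ then yields $2\bar P=\bar S$ in $\Phi\cong\Z/n$.

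Since $\bar S$ generates $\Z/n$, the equation $2x=\bar S$ is solvable there if and only if $n$ is odd, and this gives both directions. If $n$ is odd I would produce a global solution $P=cS$ with $2c\equiv 1\pmod n$, which satisfies $P^{\sigma}-w_N^0P=2cS=S$ because $S$ is rational and $w_N^0S=-S$; hence $\Pic^1(X)(\Q_N)\neq\emptyset$. If $n$ is even, solvability of $2\bar P=\bar S$ in $\Phi$ is a necessary condition for solvability of the original equation in $J_0(N)(\K_\nu)$, and it fails, so $\Pic^1(X)(\Q_N)=\emptyset$. Finally a short elementary computation of $\gcd(N-1,12)$ and of the parity of the resulting numerator across the residue classes of a prime $N>3$ modulo $24$ shows that $n$ is even precisely when $N\equiv 1$ or $N\equiv 17 \pmod{24}$, which completes the proof.

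The hard part will be the component-group input: justifying that $\bar S$ generates $\Phi\cong\Z/n$ and pinning down the exact actions of $w_N$ (as $-1$) and of Frobenius (trivially) on $\Phi$, together with the claim that reduction to $\Phi$ is a homomorphism of Galois modules compatible with $w_N^0$. Everything else is routine: the local version of Lemma~\ref{lem:triv}, the explicit construction $P=cS$ when $n$ is odd, and the parity bookkeeping modulo $24$. I would take care to state clearly that the reduction obstruction is only used in the \emph{necessity} direction, since sufficiency is handled directly by the rational point $cS$.
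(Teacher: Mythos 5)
Your proposal is correct and follows essentially the same route as the paper's own proof: localize Lemma~\ref{lem:triv}, pass to the component group $\Phi\cong C$ of the N\'eron model at $N$ (Deligne--Rapoport/Mazur), use that $w_N$ acts as $-1$ and Frobenius trivially on $\Phi$ to reduce everything to the solvability of $2x=\bar S$ in a cyclic group of order the numerator of $(N-1)/12$, and finish with the parity check mod $24$. Your write-up is in fact slightly more careful than the paper's (the explicit global solution $P=cS$ in the sufficiency direction, and the clean statement that the component-group obstruction is used only for necessity), but the mathematical content is identical.
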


\begin{proof}
Say there is a $D$ in $\Pic^1(X)(\Q_N)$. Let $J_N$ be the special fiber of the Neron model of Jacobian of $X$. By Lemma \ref{lem:triv} there is a $P$ in $J_N(\F_{N^2})$ such that the difference of $\frob(P)$ and $w_N^0(P)$ is equal to the class of the divisor $(0)-(\infty)$.

By \cite{Deligne}, $J_N$ has the form $J_N^0 \times C$ where $C$ is a cyclic group generated by the class of the divisor $(0)-(\infty)$. Moreover, the group of connected components of $J_N$, which will be denoted by $\phi_N$, is isomorphic to $C$. Let $I_P$ be the connected component in which $P$ is living and $c_P$ be the element of $C$ that corresponds to $I_P$. Then the difference of $\frob(P)$ and $w_N^0(P)$ translates into $\frob(c_P)-w_N(c_P)$. Since $w_N$ interchanges $(0)$ and $(\infty)$, it acts as $-1$ on $C$. Hence the relation translates into $\frob (c_P)+(c_P)$. The map $\frob$ induces the trivial action on $C$ since the generator of $C$ is rational, hence we end up with $c_P+c_P$. However, on the other side of the equality we had $S=(0)-(\infty)$.  This gives a contradiction since the order of $S$(which is given by numerator of $\frac{N-1}{12}$, is even there cannot be an element $c_P$ in $C$ such that $c_P+c_P=S$.

Conversely if $N$ is not $1$ mod $24$, then the order of $S$ is odd hence, there is a $c$ in the cuspidal group such that $2c=S$.
\end{proof}

\textbf{Remark:} In the course of the proof of Theorem \ref{thm:divinert} it was crucial that $C$ was isomorphic to the component group of the special fiber of Neron model of $J_0(N)$. This is true for a special class of composite $N$ values as well, but not for every square-free value of $N$. Using Table 2 in \cite{Mazur2}, we see that $\phi_p$ is isomorphic to $C$ when $N=p\prod q_i$ such that there is at least one $q_i$ that is not congruent to $1$ mod $4$ and one $q_j$ such that $q_j$ not congruent to $1$ mod $3$. If we are in this case, the order of $(0)-(\infty)$ is given by $Q\frac{p-1}{12}$, where $Q=\prod (q_i+1)$. Hence, the statement of Theorem \ref{thm:divinert} can be generalized to this case as well and same proof applies.

\begin{theorem} Let $\K$ be a polyquadratic field and say $p$ is inert in $\K$, $N=p\prod q_i$ such that $p>3$ and $q_i \equiv 3 \mod 4, q_j \equiv 2 \mod 3 $ for some $i,j$. Then $\Pic^1(X)(\Q_p) = \emptyset$, where $X$ denotes the polyquadratic twist of $X_0(N)$.  
\end{theorem}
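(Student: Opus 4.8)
The plan is to reduce the polyquadratic assertion to the quadratic mechanism of Theorem \ref{thm:divinert} and then run the component-group computation for the composite modulus $N$. The hypothesis that $p$ is inert in $\K$ means precisely that $p$ is inert in every $\Q(\sqrt{d_i})$, so $S=\{1,\ldots,k\}$; the prime above $p$ is then unramified with decomposition group generated by $\tau=\prod_i\sigma_i$, and the completion $\K_\nu$ is the unramified quadratic extension of $\Q_p$ with residue field $\F_{p^2}$. Under the twisting cocycle $\tau$ maps to $\prod_i w_{m_i}=w_N$, the full Atkin--Lehner involution (the $m_i$ being pairwise coprime with product $N$). Hence over $\Q_p$ the polyquadratic descent condition collapses to the single relation $D^{\tau}=w_N^1(D)$ for $D\in\Pic^1(X_0(N))(\K_\nu)$, and the higher-degree form of Lemma \ref{lem:triv} rephrases the existence of a class in $\Pic^1(X)(\Q_p)$ as the existence of $P\in\Pic^0(X_0(N))(\K_\nu)$ with $P^{\tau}=w_N^0 P+S$, where $S$ is the class of $(0)-(\infty)$.

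Next I would pass to the N\'eron model of $J_0(N)$ over the ring of integers of $\Q_p$ and project to its special fibre $J_N$, so that the relation becomes $\frob(\bar P)=w_N^0(\bar P)+\bar S$ in $J_N(\F_{p^2})$. Composing with the map to the component group $\phi_p$ turns this into an identity in $\phi_p$. The structural input, exactly as in the Remark following Theorem \ref{thm:divinert}, is that the congruence hypotheses make $\phi_p$ well-behaved: because some $q_i\equiv 3\bmod 4$ and some $q_j\equiv 2\bmod 3$, Table 2 of \cite{Mazur2} shows that $\phi_p$ is cyclic and isomorphic to the group $C$ generated by the reduction of $(0)-(\infty)$, with $J_N\cong J_N^0\times C$ in the sense of \cite{Deligne}. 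On $C$ the Frobenius acts trivially, since $(0)-(\infty)$ is a rational divisor, while $w_N$ acts as $-1$, since $w_N$ interchanges the cusps $0$ and $\infty$. Writing $c_P$ for the component of $\bar P$, the projected relation therefore reads $c_P=-c_P+S$, that is $2c_P=S$ inside the cyclic group $C=\langle S\rangle$.

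The final step is arithmetic: $2c_P=S$ is soluble in a cyclic group generated by $S$ exactly when the order of $S$ is odd, so it suffices to show that $|C|$ is even. By the Remark following Theorem \ref{thm:divinert}, for $N=p\prod q_i$ the order of $(0)-(\infty)$ is the numerator of $Q\tfrac{p-1}{12}$ with $Q=\prod(q_i+1)$. The condition $q_i\equiv 3\bmod 4$ gives $4\mid(q_i+1)$, hence $4\mid Q$, while $p>3$ odd gives $2\mid(p-1)$; thus the $2$-adic valuation of $Q(p-1)$ is at least $3$, strictly larger than that of $12$, so the numerator is even. Consequently $2c_P=S$ has no solution, no such $P$ exists, and $\Pic^1(X)(\Q_p)=\emptyset$.

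The step I expect to be the main obstacle is the identification $\phi_p\cong C$ together with the exact order of $(0)-(\infty)$ at this composite level: one must read off from \cite{Mazur2} that the two congruences $q_i\equiv 3\bmod 4$ and $q_j\equiv 2\bmod 3$ are exactly what force the component group to be cyclic and generated by the cuspidal class, so that the clean trivial/$(-1)$ action computation applies, and one must verify that the reduction from $\K_\nu$-points to the special fibre genuinely detects the relation on components. This last point is where Manin's theorem that differences of cusps are torsion, together with the Deligne description of $J_N$, do the essential work.
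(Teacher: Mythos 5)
Your proposal is correct and follows essentially the same route as the paper's proof: reduce to the single relation $P^{\frob}=w_N^0P+S$ via Lemma \ref{lem:triv}, use Mazur's Table 2 (\cite{Mazur2}) with the congruences $q_i\equiv 3\bmod 4$, $q_j\equiv 2\bmod 3$ to get $\phi_p\cong C$ as in the Remark after Theorem \ref{thm:divinert}, and conclude from the trivial Frobenius action and the $w_N=-1$ action that $2c_P=S$ is insoluble because the order $\prod(q_i+1)\frac{p-1}{12}$ of $S$ is even. You in fact supply details the paper compresses (the collapse of the polyquadratic descent condition to the single unramified relation, and the explicit $2$-adic valuation count), but the mechanism is identical.
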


\begin{proof}
Using Table 2 in \cite{Mazur2} and the given congruence relation on $N$, we see that $\phi_p/C$ is trivial. Then like in the proof of Theorem \ref{thm:divinert}, $\Pic^1(C)(\Q_p)$ is non-empty if and only if the order of $(0)-(\infty)$ is odd. However, in this case, the order of $(0)-(\infty)$ is given by $\prod(q_i+1)\frac{p-1}{12}$ which is always an even number under the assumption of the decomposition of $N$ given in the theorem statement.
\end{proof}

In the other cases of Mazur's Table 2  in \cite{Mazur2},  $\phi_p$ is $C \oplus D$ for some abelian group $D$. Therefore we cannot apply the same method in general, without knowing how $w_N$ and $\frob$ acts on $D$. 

Once we know $\Pic^1(C)(\Q)\neq \emptyset$, and $C(\Q_p)\neq \emptyset$ for all $p$, we can try different things to study $C(\Q)$. One of these is `descent theory' (for details see for instance \cite{Stoll1}). For genus 2 curves over rationals, 2-decent has been implemented pretty efficiently in MAGMA. Using the command \texttt{TwoCoverDescent} we were able to see that in some examples $Sel^{\pi}(C)=0$ where $\pi:D \rightarrow C$ and $D$ is a two cover. When this method fails, one can try Mordell Weil Sieve or Chabuty methods. For the examples that we have encountered, if the descent failed, the rank of the jacobian was too big to apply Chabuty methods. Therefore, Mordell Weil sieve was our only choice. Note that this method requires explicit generators of $J(\Q)$ which is usually hard to compute. 

The following table summarizes the computations done for quadratic twists of $X_0(26)$. 
Note that $J_0(26)$ decomposes over $\Q$ as $E_1 \times E_2$ where $E_i$ are elliptic curves of conductor $26$. The twist of $J_0(26)$ by the cocycle $\zeta$ is isogenous to the product of twists $E_1^{\zeta} \times E_2^{\zeta}$. And since $w_{26}$ acts as multiplication by $-1$, $E_i^{\zeta}$ is the usual quadratic twist of $E_i$. Therefore the rank of $J_0^{\zeta}(\Q)$ is the sums of the ranks of $E_i^{\zeta}$. This lets us compute the exact rank of $J_0^{\zeta}(26)$. Once we know the exact rank of the jacobian then we look for generators and in the lucky cases which we were able to find the generators, we apply Mordell Weil Sieve(if the two cover descent doesn't work). 

\begin{table}[h]
     \begin{center}
       
       \caption{}
       \begin{tabular}{c|c|c|c|c|}   \hline
       $N$ & $d$ & Rank of $J$ & Descent Works? & Set of primes for MW Sieve  \\ \hline
      $26$ & $-29$ & $1$ & Yes & \\ \hline
      $26$ & $-23$ & $2$ & No & $17,31$  \\ \hline
      $26$ & $23$ & $1$ & Yes & \\ \hline
      $26$ & $29$ & $2$ & No & $5,11,23$ \\ \hline
      $26$ & $-79$ & $2$ & Yes &  \\ \hline
\end{tabular}
     \end{center}
   \end{table}

We now change the direction and give an example of a twist $S$ which has rational points. When the genus is bigger than one we know by Faltings' theorem that $C(\Q)$ is finite however there is no known algorithm that will give us all the members of this finite set. However in some cases we can find the full list. For instance, for a genus 2 curve, we can use theory of heights to come up with the generators of the Mordell Weil group of jacobian of C(which is only feasible when the height bounds are small enough) and when the rank is $1$ we can apply Chabuty methods combined with Mordell Weil sieve. This has been implemented in MAGMA. For more details see  \cite{Stoll1} and references there. Using these we get the following example:

\begin{example} The quadratic twist of $X_0(26)$ by $\Q(\sqrt{-1})$ and $w_{26}$ has only two rational points and they parametrize the elliptic curve with j-invariant 1728, hence only rational points in the twisted curve are CM points.
\end{example}

\end{document}